\documentclass[11pt]{article}

\usepackage{amsfonts}
\usepackage{xr}
\usepackage{graphicx,psfrag}
\usepackage{amsmath}
\usepackage{color,soul,textcomp}
\usepackage[active]{srcltx}

\tolerance = 1500
\hoffset = 0pt
\voffset = 0pt
\textwidth = 470pt
\textheight = 640pt
\topmargin = 0pt
\headheight = 0pt
\headsep = 0pt
\oddsidemargin = 0pt
\evensidemargin = 0pt
\marginparwidth = 10pt
\marginparsep = 10pt
\pagenumbering{arabic}
\relpenalty=10000
\binoppenalty=10000

\def\2{C^{1,2}(\R\times\R^N)}

\def\C{\mathcal{C}}

\def\R{\mathbb{R}}

\def\tilde{\widetilde}
\def\.{\cdot}

\newlength{\textlarg}

\newcommand{\be}{\begin{equation}}
\newcommand{\ee}{\end{equation}}
\newcommand{\baa}{\begin{array}}
\newcommand{\eaa}{\end{array}}
\newcommand{\ba}{\begin{eqnarray}}
\newcommand{\ea}{\end{eqnarray}}

\newtheorem{thm}{\bf Theorem}[section]
\newtheorem{lem}[thm]{\bf Lemma}

\newtheorem{defi}[thm]{\bf Definition}
\newtheorem{rmq}[thm]{\bf Remark}

\newenvironment{proof}[1][Proof]{\noindent\textit{#1.} }{\hfill \rule{0.5em}{0.5em}}

\newenvironment{formula}[1]{\begin{equation}\label{eq:#1}}
                       {\end{equation}\noindent}
\def\Fi#1{\begin{formula}{#1}}
\def\Ff{\end{formula}\noindent}

%

%%%%%%%%%%%%%%%%%%%%%%%%%%%%%%%%%%%%%%%%%%%%
%%%%%%%%%%%%%%%%%%%%%%%%%%%%%%%%%%%%%%%%%%%%

\begin{document}
\author{Thomas Giletti$^{\hbox{\small{a}}}$ \\
\footnotesize{$^{\hbox{a }}$Graduate School of Mathematical Sciences, University of Tokyo, Komaba, Tokyo 153-8914, Japan}}
\date{}
\title{Convergence to pulsating traveling waves with minimal speed\\ in some KPP heterogeneous problems}

\maketitle

\begin{abstract}
The notion of traveling wave, which typically refers to some particular spatio-temporal connections between two stationary states (typically, entire solutions keeping the same profile's shape through time), is essential in the mathematical analysis of propagation phenomena. They provide insight on the underlying dynamics, and an accurate description of large time behavior of large classes of solutions, as we will see in this paper. For instance, in an homogeneous framework, it is well-known that, given a fast decaying initial datum (for instance, compactly supported), the solution of a KPP type reaction-diffusion equation converges in both speed and shape to the traveling wave with minimal speed. The issue at stake in this paper is the generalization of this result to some one-dimensional heterogeneous environments, namely spatially periodic or converging to a spatially periodic medium. This result fairly improves our understanding of the large-time behavior of solutions, as well as of the role of heterogeneity, which has become a crucial challenge in this field over the past few years.
\end{abstract}

%\noindent \textbf{Acknowledgement:} The author and this work were supported by the JSPS under the Post-Doctoral Fellowship program.

%%%%%%%%%%%%%%%%%%%%%%%%%%%%%%%%%%%%%%%%%%%%%%%%%%%%%%%%%

\section{Introduction}

We consider the following spatially heterogeneous reaction-diffusion equation in $(0,+\infty) \times \mathbb{R}$:
\begin{equation}\label{eqn:eqRD}
\partial_t u = \partial_{xx} u + f(x,u).
\end{equation}
Throughout this work, we will assume that $f=f(x,u)$ is locally Lipschitz-continuous in $u$, $$f(x,0) \equiv 0$$ and $f$ is of class $\C^1$ in a neighborhood of $u=0$ uniformly with respect to $x$, so that in particular the derivative $ \partial_u f (x,0)$ is well-defined. 

Moreover, $f$ is of the heterogeneous KPP type in the following sense
\begin{equation}\label{eqn:KPP_hyp}
\begin{array}{c}
\displaystyle \forall s_2 > s_1 >0, \ \inf_{x \in \R} \left(\frac{f(x,s_1)}{s_1} - \frac{f(x,s_2)}{s_2} \right)>0, \vspace{5pt}\\
\displaystyle \exists ! p(x) \in L^\infty (\mathbb{R}, \mathbb{R}_+^*) \ \mbox{ such that } \  \liminf_{x \rightarrow +\infty} p(x) >0 \ \mbox{ and } \ \partial_{xx} p + f(x,p) =0.
\end{array}
\end{equation}
For instance, in a biological context where $u$ may refer to the population density of some species, the first hypothesis means that the growth rate of this species decreases as soon as its density increases, which is the main wanted property of KPP type nonlinearities. In other words, some saturation effect appears as soon as the population is positive. The second hypothesis states the existence of some positive and bounded stationary state, toward which propagation will occur. Our assumption on the right behavior of $p$ means that it stays away from the other equilibrium state zero ahead of the domain, that is in the direction of the propagation.

A typical $f$ which satisfies these hypotheses is the logistic growth law $f(x,u) = \mu (x) u(p(x)-u)$, where $\mu$ and $p$ are regular, positive and bounded functions. In such a situation, one will observe spreading from the state 0 to the state $p$ in the associated Cauchy problem with nonnegative and non trivial initial datum. Our goal in this paper will be to look at the spreading properties of such solutions of the Cauchy problem to the right, when $f$ is either spatially periodic, or strongly converges to a spatially periodic nonlinearity ahead of the domain. We will not only address the question of the spreading speed, but also the problem of the shape of the profile of propagation in large time, which will converge to a pulsating travelling wave whose definition will be recalled below.\\

We will first begin with the periodic case, which as we will see has been much studied in the past decade. In such a setting, the notion of pulsating traveling wave is well defined, and allows to describe very accurately the large time behavior of solutions for large classes of initial data, even fast decaying ones, which was an awaited result (see also the parallel work \cite{HNRR}). We will then look, as mentioned above, at some particular but strongly heterogeneous problem, where $f$ only exponentially converges (with a high enough rate that we will precise later) to some periodic nonlinearity as $x \rightarrow +\infty$. As one may expect, we will show that the large time behavior of solutions of the Cauchy problem is still given by traveling waves of the limiting periodic problem. This is, up to the author's knowledge, an entirely new result, and one of the first stating the convergence of the profile of a solution in a non periodic environment, the study of such heterogeneities being a challenging but essential open problem in the mathematical area of reaction-diffusion equations and propagation phenomena.

\subsection{In periodic media}

We will first consider the periodic case, that is $f$ is $L$-periodic with respect to the $x$-variable. The KPP periodic reaction-diffusion equation has been well-studied in the past decade \cite{BHR-I,BHR-II,Weinberger}, in particular the existence of pulsating traveling waves. This notion generalizes the notion of traveling wave in the homogeneous environment, which meant particular solutions of a reaction-diffusion equation moving through the domain with both a constant speed and constant profile. The notion of pulsating traveling wave is slightly more intricate, as the shape of the profile fluctuates (or ``pulses") due to the periodic heterogeneity. We recall it rigorously below:\begin{defi}\label{def:puls}
A \textbf{pulsating traveling wave solution} (or \textbf{pulsating
traveling front}) of \eqref{eqn:eqRD} connecting $0$ to $p(x)>0$ is an entire solution $u$ satisfying, for some $T >0$,
$$
u(t,x-L)=u(t+T,x),
$$
for any $x\in \R$ and $t \in \R$, along with the asymptotics
$$
u(-\infty,\cdot)=0 \; \mbox{ and } \; u(+\infty,\cdot)
=p(\cdot),
$$
where the convergence is understood to hold locally uniformly
in the space variable. The ratio $c:=\frac{L}{T} >0$ is called
the \textbf{average speed} (or simply the~\textbf{speed}) of
this pulsating traveling wave.
\end{defi}
\begin{rmq}
One can easily check that, for any $c>0$, $u(t,x)$ is a pulsating
traveling wave connecting $0$ to $p$ with speed $c$ if and only
if it can be written in the form $u(t,x) = U (x-ct,x)$, where~$U(z,x)$ satisfies
\begin{equation*}
\begin{array}{c}
 U (\cdot,x+L ) \equiv U (\cdot,x), \vspace{3.5pt}\\
 U (+\infty, \cdot) =0 \ \mbox{ and }  U (-\infty,\cdot)
 =p(\cdot),
\end{array}
\end{equation*}
along with the following equation that is
equivalent to \eqref{eqn:eqRD}:
\[
(\partial_x+\partial_z)^2U+cU_z+f(x,U)=0,\ \ \forall (z,x)\in\R^2.
\]
\vspace{-15pt}
\end{rmq}
Roughly speaking, this definition means that a pulsating traveling wave also moves through the domain with a constant speed, while its profile is no longer constant but periodic in time instead.

Of course, this definition requires $p$ to be a periodic stationary solution of \eqref{eqn:eqRD}. As we will see now, this is the case of the function $p$ whose existence we stated in our KPP hypothesis~\eqref{eqn:KPP_hyp}. Indeed, let the following principal eigenvalue problem:
\begin{equation}\label{principal-eigen}
\left\{
\begin{array}{rcl}
\displaystyle -\partial_{xx} \phi_\lambda + 2 \lambda \partial_x \phi_\lambda - \frac{\partial f}{\partial u} (x,0 ) \phi_\lambda & = & \mu (\lambda) \phi_\lambda \ \mbox{ in } \R, \vspace{3pt}\\
\phi_\lambda >0 \mbox{ and  $L$-periodic}.
\end{array}
\right.
\end{equation}
For our KPP hypothesis~\eqref{eqn:KPP_hyp} to hold, we need to assume that 0 is linearly unstable, that is $\mu (0) < 0$ (otherwise, there can be no positive and bounded stationary solution of \eqref{eqn:eqRD} \cite{BHR-I}); moreover, it is then known that the positive bounded stationary solution $p$ of \eqref{eqn:eqRD} is unique and periodic with respect to $x$ \cite{BHR-I}. In other words, our problem does reduce to the periodic one, although we did not assume a priori the periodicity of our positive equilibrium state. 

One can then define 
$$c^* := \min_{\lambda >0} \frac{\lambda^2 - \mu (\lambda)}{\lambda} >0.$$
Note that for $c=c^*$, the equation $\lambda^2 - \mu (\lambda) - c^* \lambda =0$ admits a unique solution $\lambda^* >0$. For $c>c^*$, the equation $\lambda^2 - \mu (\lambda) - c \lambda =0$ admits two positive solutions, one larger than $\lambda_*$ and one smaller than $\lambda_*$ (see Lemma~2.1 in \cite{H2008}).

It is known that there exists a pulsating traveling wave with speed $c$ if and only if $c \geq c^*$~\cite{BHR-II}. Moreover, for each $c$, this pulsating front is unique up to shift in time and increasing in time~\cite{HR}. In the following, we will denote by $U_c (t,x)$ the unique front with speed $c$ such that
$$U_c (0,0)= \frac{p(0)}{2}.$$
It was also shown in~\cite{H2008} that for any $c > c^*$, there exists some constant $B(c)>0$ such that~$U_c$ has the following asymptotic behavior on the right:
$$U_c (t,x+ct) \sim B(c) e^{-\lambda_c x} \phi_{\lambda_c} (x+ct) \ \mbox{ as } x \rightarrow +\infty \mbox{ uniformly in } t\in \R,$$
where $0<\lambda_c <\lambda_*$ is the smallest $\lambda$ such that $\lambda_c^2 - \mu (\lambda_c) - c\lambda_c =0$, and $\phi_{\lambda_c}$ the associated principal eigenfunction, normalized so that $\max \phi_{\lambda_c} =1$.\\

Those speeds, in particular the minimal speed $c^*$, are known to play an important role in spreading dynamics in the Cauchy problem associated with equation~\eqref{eqn:eqRD}. For instance, it is known that a compactly supported initial datum will spread with the speed $c^*$ in both directions~\cite{BHNadin,Weinberger}. That is, if we look in a moving frame with speed less than $c^*$, we see $u$ go to $p$ as $t \rightarrow +\infty$, while if we look in a moving frame with speed larger than $c^*$, we see $u$ go to $0$ in large time.

However, convergence of the profile of solutions of the Cauchy problem to that of a pulsating traveling wave is much less known. Most results in the past few years were concerned with initial data which behave similarly to a pulsating traveling wave as $x \rightarrow +\infty$, for instance initial data which can be trapped between two shifts of a pulsating front \cite{BMR08,BMR11}. The issue at stake in this paper will be the more physically relevant case of compactly supported or fast decaying initial data, which are expected to converge to the pulsating wave with minimal speed. A first step was proven as a corollary in \cite{DGM}, that is the case of an Heaviside type initial datum. By analogy with the proof of Lau in the homogeneous case \cite{Lau85} and for fast decaying initial data, this will lead us to prove the following theorem:

\begin{thm}\label{th:CV_p}
Assume that $f$ is KPP and periodic, and denote by $U_{c^*}$ the pulsating front with minimal speed. Let some differentiable initial datum $0 \leq u_0 \leq p$ such that
$$|u_0(x)| + |u'_0 (x) | \leq A e^{-\lambda_* x},$$
for some $A>0$ and with $\lambda_{*}$ the unique solution of $\lambda_*^2 - c^* \lambda_* = \mu (\lambda_* )$.

Then there exists $m(t)=o(t)$ such that the associated solution of the Cauchy problem satisfies
$$\|u(t,\cdot)-U_{c^*} (t-m(t),\cdot) \|_{L^\infty (\mathbb{R}^+)} \rightarrow 0$$
as $t \rightarrow +\infty$.
\end{thm}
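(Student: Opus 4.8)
The plan is to follow the strategy of Lau's original homogeneous argument, suitably adapted to the periodic heterogeneity, by combining two ingredients: a sharp comparison from above and below trapping $u(t,\cdot)$ between time-shifts of $U_{c^*}$, and a compactness argument identifying the limit of $u$ along any diverging sequence of times with a pulsating front of minimal speed. First, I would construct explicit sub- and supersolutions near the leading edge. Since $U_{c^*}$ decays like $x\, e^{-\lambda_* x}\phi_{\lambda_*}$ ahead of the front (the critical case where the two roots of $\lambda^2-c^*\lambda=\mu(\lambda)$ coalesce), while the initial datum decays merely like $e^{-\lambda_* x}$, the solution $u$ cannot be directly squeezed between two fixed shifts of $U_{c^*}$; instead, as in Lau, one expects a logarithmic drift. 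The key is to show that for suitable functions $m^\pm(t)$ with $m^\pm(t)=o(t)$ (in fact of order $\log t$ in the homogeneous case, but here I would only aim for the $o(t)$ statement claimed), one has $U_{c^*}(t-m^+(t),x)\le u(t,x)\le U_{c^*}(t-m^-(t),x)$ for $t$ large. The upper bound is obtained by comparison with a linearized supersolution of the form $e^{-\lambda_* x}\phi_{\lambda_*}(x)$ times an appropriate polynomial-in-$(x,t)$ envelope solving the heat-type equation at the critical exponent, then using the known edge asymptotics of $U_{c^*}$ to absorb it into a shift. The lower bound is more delicate: I would use that, by the spreading result cited from \cite{BHNadin,Weinberger}, $u$ converges locally uniformly to $p$, and then bootstrap from any fixed level set, using the steepness/monotonicity in time of $U_{c^*}$ from \cite{HR} to propagate the bound up the profile.

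Second, with the trapping in hand, I would define $m(t)$ via a level set of $u$, e.g. the (minimal) value such that $u(t,\cdot)$ and $U_{c^*}(t-m(t),\cdot)$ agree at a chosen normalization point, so that $m(t)$ lies between $m^-(t)$ and $m^+(t)$ and hence is $o(t)$. Then I would argue by contradiction: if $\|u(t,\cdot)-U_{c^*}(t-m(t),\cdot)\|_{L^\infty(\R^+)}\not\to 0$, pick times $t_n\to\infty$ along which the norm stays bounded below. Along a suitable sequence of integer shifts (exploiting $L$-periodicity), and using parabolic regularity estimates to get local compactness, I would extract a limit $u_\infty$ that is an entire solution of \eqref{eqn:eqRD}, sandwiched between $0$ and $p$, and nontrivial. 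The trapping inequalities, passed to the limit, would force $u_\infty$ to inherit the pulsating-wave structure $u_\infty(t,x-L)=u_\infty(t+T,x)$ with $c=c^*$ and the correct asymptotics $0$ at $+\infty$ and $p$ at $-\infty$; by the uniqueness up to time-shift from \cite{HR}, $u_\infty$ must be a shift of $U_{c^*}$, contradicting the normalization that makes it differ from $U_{c^*}(\cdot-m(t_n),\cdot)$ at the chosen point. This yields the convergence.

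The main obstacle I expect is establishing the sharp \emph{lower} bound $u(t,x)\ge U_{c^*}(t-m^+(t),x)$ with $m^+(t)=o(t)$, uniformly in $x\in\R^+$ including the leading edge. Near $x=+\infty$ the solution's decay rate is dictated by the initial datum's $e^{-\lambda_* x}$ tail, which is \emph{exactly} the critical rate, so there is no room to spare: one must quantify precisely how the polynomial correction builds up in time, i.e. reproduce the parabolic self-similar ``$\sqrt t$ versus $\log t$'' behavior at the critical front speed, but now with the periodic eigenfunction $\phi_{\lambda_*}$ modulating everything and with $f$ only KPP (not, say, concave), so the linearization near $0$ must be handled carefully together with the nonlinear saturation further back. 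Controlling the matching between the linear-edge regime and the region where $u$ is order one — and showing the resulting shift is genuinely $o(t)$ rather than $O(t)$ — is where the real work lies; the compactness/uniqueness part of the argument is comparatively standard once the trapping is secured.
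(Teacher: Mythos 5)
There is a genuine gap, and it sits exactly where you locate ``the real work.'' Your scheme rests on a two-sided \emph{ordering} trap, $U_{c^*}(t-m^+(t),x)\le u(t,x)\le U_{c^*}(t-m^-(t),x)$ for all $x\in\R^+$, but the lower inequality cannot hold up to the leading edge in this critical case. By comparison with the linearized equation, the initial bound $u_0\le Ae^{-\lambda_*x}$ propagates to $u(t,x)\le \frac{A}{\min\phi_{\lambda_*}}e^{-\lambda_*(x-c^*t)}\phi_{\lambda_*}(x)$ for all $t$, i.e.\ a pure $e^{-\lambda_*x}$ tail, whereas the critical front has the fatter tail $\sim x\,e^{-\lambda_*x}\phi_{\lambda_*}(x)$ (the coalescing-roots case you yourself invoke). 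Hence, for every fixed $t$ and every shift, $U_{c^*}(t-m^+(t),\cdot)$ eventually lies \emph{above} $u(t,\cdot)$ far to the right: the two profiles must cross near the edge, and no choice of $m^+(t)$ repairs this. This is precisely why the paper (following Lau) does not attempt a global ordering but controls the \emph{number of crossings}: it shows $u(t_k,\cdot+kL)$ is simultaneously steeper and less steep than $U_{c^*}(0,\cdot)$, using the intersection-number lemma together with the key input from the propagating-terrace paper that $U_{c^*}$ is steeper than every other entire solution, and comparing with slightly supercritical fronts $U_{c^*+\delta}$ (cut off in their tails, against a modified datum $u_0+\phi_{\lambda_c}e^{-\delta r}e^{-\lambda'x}$ with $\lambda_c<\lambda'<\lambda_*$), before letting $\delta\to0$.

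The second, independent gap is the identification step. Even if some trapping with shifts $m^\pm(t)=o(t)$ were available, their difference is not bounded (it is at least logarithmic), so passing to the limit along $t_n\to\infty$ in those inequalities yields only $0\le u_\infty\le p$ in the moving frame: the limit entire solution does not inherit the relation $u_\infty(t,x-L)=u_\infty(t+T,x)$ nor the uniform asymptotics at $\pm\infty$ that the Hamel--Roques uniqueness theorem requires, since that theorem classifies pulsating fronts, not arbitrary entire solutions between $0$ and $p$. Identifying the limit as $U_{c^*}$ is the crux of the theorem, and in the paper it is exactly the steepness machinery (Lemma~\ref{lem:steep} plus the ``$U_{c^*}$ is steepest'' theorem) that performs this identification; no substitute for it appears in your outline, and the sharp lower bound at the edge that you defer is, as argued above, not merely hard but unattainable in the ordered form you propose.
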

\begin{rmq}
Note that it is immediate, by symmetry, that a similar result holds when looking at propagation to the left of the domain.

Moreoever, the regularity assumption on $u_0$ could actually be weakened, as our proof will only need the estimate on $|u_0 ' (x)|$ to be satisfied for $x$ large enough.
\end{rmq}
This result is also proven, at least for compactly supported initial data, in a parallel work by Hamel, Nolen, Roquejoffre and Ryzhik \cite{HNRR}. Furthermore, they provide an estimate on the shift~$m(t)$, namely that $m(t)=\frac{3}{2 \lambda_* } \log t + O(1)$. This logarithmic shift had only been proven before in the homogeneous case using probabilistic techniques~\cite{Bramson83}.

Our method will roughly show that the convergence holds as soon as the average spreading speed of $u$ is getting close to $c^*$, that is as soon as the position of the profile is close to $c^*t $, up to some $o(t)$. This fact will be needed in the second part of this paper, where we will consider perturbed periodic equations.

\subsection{In ``close-to-periodic" media}

In this section, we no longer assume that $f$ is periodic in its $x$-variable. We will now make the assumption that $f$, although it may now be strongly heterogeneous, converges to some KPP and $L$-periodic function $\tilde{f}$. More precisely, there exists some $C>0$ such that for any $u \geq 0$,
\begin{equation}\label{eqn:eq_CTP}
| f(x,u) - \tilde{f} (x,u) | \leq C e^{-2 \lambda_* x} u,
\end{equation}
where $\lambda_{*}$ is given as above from the periodic equation with reaction term $\tilde{f}$, that is it is the unique solution of $\lambda_*^2 - c^* \lambda_* = \tilde{\mu} (\lambda_*)$, where $\tilde{\mu}$ is defined as $\mu$ with $f$ replaced by $\tilde{f}$, and $c^*$ the minimal speed of pulsating traveling waves of this KPP periodic equation.

We will refer to this situation as ``close-to-periodic", and to the following periodic reaction-diffusion equation
\begin{equation}\label{eqn:eqRD_lim}
\partial_t u = \partial_{xx} u + \tilde{f}(x,u)
\end{equation}
as the limiting problem. Because $\tilde{f}$ is KPP, this problem admits a unique positive stationary solution~$\tilde{p}$ which is also $L$-periodic. It is clear from this uniqueness that $p(x) - \tilde{p} (x) \rightarrow 0$ as~$x \rightarrow +\infty$.

Here, we consider any non trivial initial datum $0 \leq u_0 \leq p$ such that there exists some $D>0$ with $u_0 \equiv 0$ on the half line $[D,+\infty)$. Our goal is to show that the associated solution of the Cauchy problem \eqref{eqn:eqRD} still converges to the traveling wave $U_{c^*}$ with minimal speed of the limiting periodic problem \eqref{eqn:eqRD_lim}. We first begin with a theorem describing the large time behavior of $u$ in the non moving frame:
\begin{thm}\label{th:spread_ctp}
Assume that $f$ is KPP and close-to-periodic, and let $0 \leq u_0 \leq p$ some initial datum such that it is not identically equal to 0. Then the associated solution $u(t,x)$ of the Cauchy problem converges locally uniformly to $p$.
\end{thm}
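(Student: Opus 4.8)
The plan is to obtain the convergence as a two‑sided squeeze: from above by the stationary state $p$ itself, and from below by the steady states of Dirichlet problems posed on larger and larger intervals, whose increasing limit is then forced to coincide with $p$ by the KPP structure. To begin with, since $\partial_{xx}p+f(x,p)=0$ and $0\le u_0\le p$, the parabolic comparison principle gives $0\le u(t,x)\le p(x)$ for all $t>0$ and $x\in\R$, and since $u_0\not\equiv 0$ the strong maximum principle gives $u(t,x)>0$ for all $t>0$. In particular $\limsup_{t\to+\infty}u(t,x)\le p(x)$, so it remains to prove $\liminf_{t\to+\infty}u(t,x)\ge p(x)$ locally uniformly in $x$.

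The first step towards the lower bound is to produce a positive lower bound on a large bounded interval far to the right. Fix $I=(a,b)$ with $a$ and $b-a$ large. By \eqref{eqn:eq_CTP} one has $\partial_u f(x,0)\to\partial_u\tilde f(x,0)$ as $x\to+\infty$, so the Dirichlet principal eigenvalue $\mu_I$ of $-\partial_{xx}-\partial_u f(x,0)$ on $I$ is close to the periodic principal eigenvalue $\tilde\mu(0)$ of \eqref{principal-eigen} (for $\tilde f$, at $\lambda=0$) when $a$ and $b-a$ are large; since the limiting problem is KPP, $\tilde\mu(0)<0$ (equivalently, the existence of the bounded positive steady state $p$ already forces the relevant principal eigenvalue to be negative on large intervals), so we may fix $I$ with $\mu_I<0$. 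On such an interval the Dirichlet problem for \eqref{eqn:eqRD} has a unique positive steady state $\theta_I>0$ attracting every positive solution — classical once $\mu_I<0$, using the KPP inequality $f(x,u)\le\partial_u f(x,0)u$ and barriers $M\varphi_I$, $\varepsilon\varphi_I$ built from the Dirichlet eigenfunction $\varphi_I$. Since $u$ solves \eqref{eqn:eqRD} and $u(1,\cdot)>0$ on $\overline I$, $u$ restricted to $I$ is a supersolution of this Dirichlet problem whose initial datum dominates some positive function vanishing at $\partial I$; by comparison $u(t,\cdot)\ge w(t,\cdot)$ on $I$ for $t\ge1$, where $w$ is the Dirichlet solution, and letting $t\to+\infty$ gives $\liminf_{t\to+\infty}u(t,x)\ge\theta_I(x)>0$ for $x\in I$.

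Next I would enlarge the interval: for any $R$ with $(-R,R)\supset I$, the Dirichlet principal eigenvalue on $(-R,R)$ is only smaller, hence still negative, and the same argument (using $u(1,\cdot)>0$ on $[-R,R]$) gives $\liminf_{t\to+\infty}u(t,x)\ge\theta_{(-R,R)}(x)$ for $|x|<R$. The family $R\mapsto\theta_{(-R,R)}$ is nondecreasing and bounded above by $p$ (as $p$ is a positive supersolution of each Dirichlet problem), so it converges in $\C^2_{loc}(\R)$ to a bounded stationary solution $\theta_\infty$ of \eqref{eqn:eqRD} with $0<\theta_\infty\le p$. The close‑to‑periodic assumption then forces $\liminf_{x\to+\infty}\theta_\infty(x)>0$ — deep inside a large Dirichlet interval and for $x$ large, \eqref{eqn:eqRD} is close to the periodic problem \eqref{eqn:eqRD_lim} whose steady state $\tilde p$ is bounded away from $0$ — so the uniqueness clause of \eqref{eqn:KPP_hyp} yields $\theta_\infty\equiv p$. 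Hence $\liminf_{t\to+\infty}u(t,x)\ge p(x)$ pointwise, and combined with $u(t,x)\le p(x)$ this gives $u(t,x)\to p(x)$ for every $x$; the convergence is locally uniform since $\{u(t,\cdot)\}_{t\ge1}$ is equicontinuous on compact sets by interior parabolic estimates (Arzel\`a--Ascoli), or directly by Dini's theorem applied to the monotone data $p\ge u(t,\cdot)$ and $\theta_{(-R,R)}\uparrow p$.

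The main obstacle is the first lower bound: making it rigorous that, in this genuinely heterogeneous medium, $0$ is ``unstable enough'' to trigger the invasion, i.e. that the Dirichlet principal eigenvalue of the linearization at $0$ becomes negative on sufficiently large intervals — this is where the close‑to‑periodic structure (or, equivalently, the assumed existence of the positive steady state $p$) is essential. A secondary but genuine point, again relying on close‑to‑periodicity, is to verify that the increasing limit $\theta_\infty$ of the Dirichlet steady states does not degenerate to $0$ as $x\to+\infty$, so that the uniqueness part of \eqref{eqn:KPP_hyp} can be invoked to identify it with $p$.
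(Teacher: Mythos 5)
Your argument is correct in outline, but it follows a somewhat different and more elementary route than the paper. The paper deduces Theorem~\ref{th:spread_ctp} as an immediate corollary of the spreading result, Lemma~\ref{lem:spread}: there, a compactly supported subsolution $\kappa\psi_{0,R,y}$ is built far to the right from a principal eigenfunction of the limiting periodic problem \eqref{another_eigen} with $c=0$ (the close-to-periodic error $Ce^{-2\lambda_*(y-R)}$ being absorbed by the negative eigenvalue $\tilde{\mu}_{0,R}$), the solution starting from this subsolution increases in time to a stationary state $q$, the bound $\liminf_{x\to+\infty}q>0$ is obtained by sliding translated eigenfunctions $\kappa_n\psi_{0,R,y_n}$ under $q$ and invoking the strong maximum principle, $q\equiv p$ then follows from the uniqueness clause of \eqref{eqn:KPP_hyp}, and finally $u(1,\cdot)>0$ dominates such a subsolution. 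You replace the time-monotone trajectory by the family of Dirichlet steady states $\theta_{(-R,R)}$ on expanding intervals, and the space-time periodic eigenvalue problem by the Dirichlet eigenvalue of the true linearization on far-right intervals (negative since $\partial_u f(\cdot,0)\to\partial_u\tilde{f}(\cdot,0)$ exponentially fast by \eqref{eqn:eq_CTP} and $\tilde{\mu}(0)<0$); the increasing limit $\theta_\infty$ is then identified with $p$ through the same uniqueness clause. Both routes thus hinge on the same two points (a negative principal eigenvalue far to the right, and identification of a limiting stationary state via the $\liminf_{x\to+\infty}>0$ requirement in \eqref{eqn:KPP_hyp}); yours is self-contained for the statement at hand, while the paper's moving-frame eigenvalue machinery simultaneously yields the spreading speed $c^*$, which is needed later for Theorem~\ref{th:CV_ctp}. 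Two points to tighten in your write-up: the barrier $M\varphi_I$ is not a supersolution near $\partial I$ (there $M\varphi_I$ is small and $f(x,s)/s$ is close to $\partial_u f(x,0)>\partial_u f(x,0)+\mu_I$), but it is also unnecessary since $p$ itself serves as the upper barrier; and the step $\liminf_{x\to+\infty}\theta_\infty>0$, which you only sketch, can be completed either by making your idea quantitative (uniformly negative Dirichlet eigenvalues on fixed-length intervals $(y-\rho,y+\rho)$ for $y$ large, a choice of $\kappa$ uniform in $y$ in the subsolution $\kappa\varphi_y$ thanks to the uniform $C^1$ regularity of $f$ at $u=0$, and elliptic Harnack to bound $\varphi_y$ from below at the center) or by the paper's touching argument recalled above.
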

The theorem above describes the large time behavior of $u$ where $p$ may not be close to $\tilde{p}$, thus where we cannot expect $u$ to be close to $U_{c^*}$, in order to give a more complete picture of the dynamics.\\

We now state our main result in the close-to-periodic case, on the convergence of the profile of propagation:
\begin{thm}\label{th:CV_ctp}
Assume that $f$ is KPP and close-to-periodic, and denote by $U_{c^*}$ the pulsating front, connecting 0 to $\tilde{p}$ with minimal speed~$c^*$, of \eqref{eqn:eqRD_lim}.
Let $0 \leq u_0 \leq p$ some continuous initial datum such that
$$\forall x \geq D, \ u_0 (x)= 0,$$
for some $D>0$.

Then there exists $m(t)=o(t)$ such that the associated solution of the Cauchy problem satisfies, for any $\alpha (t) \rightarrow +\infty$ as $t \rightarrow +\infty$,
$$\|u(t,\cdot)-U_{c^*} (t-m(t),\cdot) \|_{L^\infty (\alpha (t),+\infty)} \rightarrow 0$$
as $t \rightarrow +\infty$.\end{thm}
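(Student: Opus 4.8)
\noindent\emph{Strategy.} The plan is to deduce Theorem~\ref{th:CV_ctp} from Theorem~\ref{th:CV_p} applied to the limiting periodic equation~\eqref{eqn:eqRD_lim}: restricting the convergence to $[\alpha(t),+\infty)$ with $\alpha(t)\to+\infty$ is exactly what discards the two regions where the original and limiting problems truly differ, namely a neighbourhood of the origin (where $p$ and $\tilde p$ need not be close) and, more generally, any bounded range of $x$ (where~\eqref{eqn:eq_CTP} carries no information). Note first that, for a fixed $m(t)$, the conclusion is equivalent to the ``uniform at infinity'' statement: for every $\e>0$ there are $A,T\ge 0$ with $\sup_{x\ge A}|u(t,x)-U_{c^*}(t-m(t),x)|\le\e$ for all $t\ge T$. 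Indeed this implies the conclusion for every $\alpha(t)\to+\infty$, while if it fails one extracts $t_n\to+\infty$, $x_n\to+\infty$ realizing a fixed gap and builds from them a nondecreasing $\alpha(t)\to+\infty$ along which the $L^\infty(\alpha(t),+\infty)$ norm does not vanish. So I may take $A$ as large as I wish in terms of $\e$.

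\noindent\emph{Step 1: $u$ spreads with speed $c^*$.} I would show that for every $\delta>0$, $u(t,\cdot)\to p$ uniformly on $[0,(c^*-\delta)t]$, $u(t,\cdot)\to 0$ uniformly on $[(c^*+\delta)t,+\infty)$, and $u(t,\cdot)$ has a right tail no worse than that of $U_{c^*}$. The upper bounds follow from a supersolution: the KPP structure gives $f(x,u)\le\partial_u f(x,0)u\le\big(\partial_u\tilde f(x,0)+Ce^{-2\lambda_* x}\big)u$, and the extra zeroth-order term $Ce^{-2\lambda_* x}$, decaying far faster than the periodic eigenfunctions $\phi_{\lambda_c}$ for $c>c^*$ close to $c^*$, can be absorbed into the periodic supersolution $e^{-\lambda_c(x-ct)}\phi_{\lambda_c}(x)$ through a multiplicative correction $\exp(Ke^{-2\lambda_* x})$; bounding $u$ also by the stationary state $p$ (which dominates it at all times) controls the back, and letting $c\downarrow c^*$ yields the claims. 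The lower bound is where the heterogeneity is felt, since Theorem~\ref{th:spread_ctp} only gives local-uniform convergence to $p$: I would, at a large time $t_0$, use that $u(t_0,\cdot)$ is already close to $p$ on a large interval, fix $x_0$ far enough to the right that $Ce^{-2\lambda_* x}$ is negligible on $[x_0,+\infty)$, and launch from $x_0$ a compactly supported subsolution of the (there, essentially periodic) equation moving right at speed arbitrarily close to $c^*$; a standard KPP sweeping argument then gives $u(t,\cdot)\ge p-o(1)$ on $[0,(c^*-\delta)t]$.

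\noindent\emph{Step 2: comparison with the limiting problem.} Let $v$ solve~\eqref{eqn:eqRD_lim} with datum $v(T_0,\cdot)=\min\{u(T_0,\cdot),\tilde p\}$ at a fixed large $T_0$; the truncation affects a bounded range of $x$ only and ensures $0\le v(T_0,\cdot)\le\tilde p$. Since $u_0$ is compactly supported and $\partial_u f(\cdot,0)$ is bounded, a comparison with the heat equation shows $u(T_0,\cdot)$ decays faster than any exponential, so $v(T_0,\cdot)$ meets the hypothesis of Theorem~\ref{th:CV_p} (in the weakened-regularity sense of the remark there); hence $\|v(t,\cdot)-U_{c^*}(t-\tilde m(t),\cdot)\|_{L^\infty(\R^+)}\to 0$ for some $\tilde m(t)=o(t)$ (the time shift by $T_0$ changes $\tilde m$ by $O(1)$), and in particular $v(t,\cdot)\to\tilde p$ uniformly on $[0,(c^*-\delta)t]$. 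Taking $m:=\tilde m$, by the triangle inequality it remains to prove $\|u(t,\cdot)-v(t,\cdot)\|_{L^\infty(\alpha(t),+\infty)}\to 0$, and I split $[\alpha(t),+\infty)$ at $x=(c^*-\delta)t$. On $[\alpha(t),(c^*-\delta)t]$ one writes $|u-v|\le|u-p|+|p-\tilde p|+|\tilde p-v|$, where the first and third terms vanish uniformly by Step~1 and the spreading of $v$, and $|p(x)-\tilde p(x)|\le\e$ for $x\ge A$ large since $p-\tilde p\to 0$ at $+\infty$. On $[(c^*-\delta)t,+\infty)$ the difference $w=u-v$ solves $\partial_t w=\partial_{xx}w+b(t,x)w+g(t,x)$ with $b$ bounded, $|g(t,x)|\le Ce^{-2\lambda_* x}u(t,x)$, and $w(T_0,\cdot)=(u(T_0,\cdot)-\tilde p)^+$ bounded and vanishing at $+\infty$ like $p-\tilde p$; measuring $w$ against the front profile $U_{c^*}$, the renormalized source $g/U_{c^*}$ is super-exponentially small in $t$ on $\{x\ge(c^*-\delta)t\}$, because $e^{-2\lambda_* x}$ beats the growth $1/U_{c^*}\sim e^{\lambda_*(x-c^*t)}$ of the reciprocal front precisely since the exponent in~\eqref{eqn:eq_CTP} is $2\lambda_*>\lambda_*$. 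Comparing $w$ against a supersolution carried by the same exponential weight (so that the linearly unstable coefficient $b\le\partial_u\tilde f(x,0)$ ahead of the front is controlled, as in Step~1), and matching at the moving interface $x=(c^*-\delta)t$ with the $o(1)$ bound from the other region, gives $|w|\to 0$ uniformly there as well.

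\noindent\emph{Main difficulty.} Both the lower spreading bound of Step~1 and the front-region comparison of Step~2 must be run not in the fixed frame but in the frame attached to the front, weighted by its exponential profile, where~\eqref{eqn:eq_CTP} becomes a genuinely small, $t$-summable perturbation; keeping track of the moving interface between ``bulk'' and ``front'', and of the polynomial correction peculiar to the minimal-speed front $U_{c^*}$ (whose tail, unlike that of the $U_c$ with $c>c^*$, is not purely exponential), is the technical core of the argument.
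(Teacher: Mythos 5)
Your Step 1 and the ``bulk'' part of Step 2 are essentially sound (they parallel the paper's Lemma~\ref{lem:spread} and the triangle inequality through $p$ and $\tilde p$), but the decisive step --- uniform smallness of $w=u-v$ on $[(c^*-\delta)t,+\infty)$ --- is not proved, and the tool you propose cannot prove it. Your comparison function is (a multiple of) the weight $W(t,x)=e^{-\lambda_*(x-c^*t)}\phi_{\lambda_*}(x)$, which indeed solves the periodic linearization at $0$, dominates the boundary values at the interface $x=(c^*-\delta)t$ (where $W\sim e^{\lambda_*\delta t}$ is huge) and absorbs the $e^{-2\lambda_*x}$ source. But the prefactor cannot be sent to $0$: at the initial time, and at any restarted time $T_1$, the only available pointwise bounds on $u$ and $v$ in the window around $x\approx c^*T_1$ are themselves multiples of $W$, so the comparison constant stays of order one. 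Since the front location $c^*t+o(t)$ lies inside the region where you must conclude, and there $W$ is of order one (in fact it grows like a power of $t$ once the logarithmic delay is taken into account), the resulting estimate $|u-v|\le CW$ gives smallness only strictly ahead of the front and is perfectly consistent with $u$ and $v$ being two fronts with a bounded, non-vanishing relative phase shift. Controlling that relative phase --- i.e.\ showing that the delay of $u$ agrees with that of $v$ up to $o(1)$ --- is precisely the content of the theorem; linear $L^\infty$/supersolution bounds are blind to the neutral translation mode of the front, so nothing in your argument addresses it. (Two smaller points: the correction $\exp(Ke^{-2\lambda_*x})$ in Step 1 has the wrong sign to turn $e^{-\lambda_c(x-ct)}\phi_{\lambda_c}$ into a supersolution of the perturbed linearization --- one needs a factor of the form $\exp(-Ke^{-2\lambda_*x})$, or the construction of Lemma~\ref{lem:spread}; and your initial matching $|w(T_0,\cdot)|\le A\,W(T_0,\cdot)$ needs a decay rate for $p-\tilde p$, at least $e^{-\lambda_*x}$, which you assert but do not establish.)

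The paper closes exactly this gap by a different mechanism: the steepness/intersection-number argument of Lemma~\ref{lem:steep}, applied to the heterogeneous equation itself. It defines $t_k$ as the crossing times of $u$ at $x=kL$, and compares $u$ with auxiliary solutions of \eqref{eqn:eqRD} whose compactly supported data $v_{0,k}=\gamma_k\,\tilde p\,\chi_{(y_k-R,y_k+R)}$ are placed at an intermediate scale $y_k$ (of order $\sqrt{kL}$-type: large compared with $\delta(k)k$, small compared with $kL$) and whose amplitude $\gamma_k$ is tuned so that the corresponding solution of the limiting periodic problem \eqref{eqn:eqRD_lim} crosses the level $\tilde p(0)/2$ at $kL$ exactly at time $t_k$. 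The placement at $y_k$ makes the discrepancy between the heterogeneous and periodic evolutions of this particular datum provably small near $x=kL$ up to time $t_k$ (the supersolutions $\overline w_1,\overline w_2$ exploit that the error only becomes order one after a time $\sim y_k/c'$ and then travels at speed $c^*$, hence stays $\sim y_k$ behind), while $y_k=o(kL)$ lets the quantitative, $u_0$-uniform version of the periodic convergence from Section~\ref{sec:periodic} apply; steepness then transfers these estimates to $u$ at its own crossing times, which is what pins down the phase. Your fixed-$T_0$ restart, with data spread over the whole bulk where $f$ and $\tilde f$ differ by order one, has no analogue of this two-scale choice and no substitute mechanism for the phase control, so as written the proof does not go through.
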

%
%\begin{rmq}
%Note that for the two theorems above to be consistent with each other, it is obviously necessary that $p(x) - \tilde{p} (x) \rightarrow 0$ as $x \rightarrow +\infty$, as we announced above. Although we enounce it here as a corollary, it will in fact be shown along with Theorem~\ref{th:spread_ctp} as part of the proof of Lemma~\ref{lem:spread}, which will be concerned with spreading speeds of solutions $u$ satisfying the assumptions of Theorem~\ref{th:CV_ctp}.
%\end{rmq}

Theorem~\ref{th:CV_p} is partly (recall that it dealt with larger classes of initial data) a corollary of Theorems~\ref{th:spread_ctp} and~\ref{th:CV_ctp} in the particular case $f \equiv \tilde{f}$. One could also consider, following the method presented in this work, roughly periodic environments with a varying period $l(x)$, such as in \cite{GGN} where spreading speeds where studied in slowly varying media. If the period converges quickly enough to some finite value $l_\infty$, one would get convergence to the pulsating traveling wave with minimal speed of the $l_\infty$ periodic problem. If the period converges to $+\infty$, one would get, in some extremal cases, either convergence to some homogeneous traveling waves around each point (when the period grows very quickly) or convergence to the pulsating waves of the problem with large periods (when the period grows very slowly).

However, note that our ``close-to-periodic" assumption is quite strong, as one may expect convergence to the pulsating wave $U_{c^*}$ even if $f$ converges more slowly to $\tilde{f}$. Hence, we expect such results to hold in more general situations, which unfortunately could not be dealt with our method, as it will be made clearer from the proof. Moreover, it is not known how the delay term $m(t)$ behaves in this context, and in particular if and how it is affected by non periodic heterogeneities. This further highlights the difficulty of studying strongly heterogeneous problems rigorously, and gives further motivation in the present work.

%%%%%%%%%%%%%%%%%%%%%%%%%%%%%%%%%%%%%%%%%%%%%%%%%%%%%%%%%%%%%%%%%%%%%%%%%%%%%%%%%%

\section{Some short preliminaries}\label{sec:twovalues}

We quickly begin by introducing the steepness argument, on which our proofs will strongly rely. This argument directly comes from the zero-number (or intersection-number) argument, which has become a powerful tool in the convergence proofs in semilinear parabolic equations~\cite{An88,Matano78}.\\

Let us first define what we will mean by steepness:
\begin{defi}
Let two functions $u_1 : \R \mapsto \R$ and $u_2 : \R \mapsto \R$. We say that $u_1$ is \textbf{steeper than}~$u_2$ if, for any $x \in \R$ such that $u_1 (x) = u_2 (x)$, then $u_1 (y) \geq u_2 (y)$ for any $y \leq x$ and $u_2 (y) \geq u_1 (y)$ for any $y \geq x$.
\end{defi}
The main idea behind the zero-number argument is that the number of intersections of two solutions of a parabolic equation is decreasing in time, and can therefore serve as a Lyapunov function. We refer to \cite{An88} for details and proofs. As we will aim to compare steepness of some solutions in the sense defined above, we will only consider the particular case of a number of intersections never exceeding one. In the same setting, this powerful tool was previously used by the author in an earlier work~\cite{DGM}, in order to get the convergence to pulsating traveling waves given an Heaviside type initial datum (or, for more complex multistable nonlinearities, to a layer of several fronts that we chose to call a propagating terrace). As mentioned above, our work will rely on this earlier result. Furthermore, the following lemma on the intersection number and steepness properties was shown:

\begin{lem}\label{lem:steep}
Let two solutions $u_1$ and $u_2$ of \eqref{eqn:eqRD} with initial data respectively $u_{0,1}$ piecewise continuous and bounded, and $u_{0,2}$ continuous and bounded. If $u_{0,1}$ is steeper than $u_{0,2}$, then $u_1 (t,\cdot)$ is steeper than $u_2 (t, \cdot)$ for any $t>0$.
\end{lem}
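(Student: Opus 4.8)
The plan is to reduce the steepness statement to a statement about the intersection number of $u_1(t,\cdot)$ and $u_2(t,\cdot)$ and then invoke the classical zero-number theory of Angenent. First I would observe that the hypothesis "$u_{0,1}$ is steeper than $u_{0,2}$" has a convenient reformulation: either the two data coincide everywhere (a degenerate case that is handled separately by uniqueness for the Cauchy problem, since then $u_1\equiv u_2$ and $u_1$ is trivially steeper than $u_2$), or they have a connected set of intersection points $I$, to the left of which $u_{0,1}\geq u_{0,2}$ and to the right of which $u_{0,2}\geq u_{0,1}$. In particular the difference $w_0:=u_{0,1}-u_{0,2}$ changes sign at most once, from nonnegative to nonpositive, as $x$ increases; equivalently the zero-number of $w_0$ on $\R$ is at most $1$. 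Note that $w:=u_1-u_2$ solves a linear parabolic equation $\partial_t w=\partial_{xx}w+c(t,x)w$ with $c(t,x):=\big(f(x,u_1(t,x))-f(x,u_2(t,x))\big)/(u_1(t,x)-u_2(t,x))$, which is a bounded coefficient on compact time intervals because $f$ is locally Lipschitz in $u$ and $u_1,u_2$ are bounded.

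The core step is then to apply the intersection-number (zero-number) principle of \cite{An88,Matano78} to $w$: the number of sign changes (zeros) of $x\mapsto w(t,x)$ is nonincreasing in $t$, so it stays $\le 1$ for all $t>0$. Here one must be slightly careful because we are on the whole line $\R$ rather than a bounded interval; this is dealt with in the standard way, either by working on large bounded intervals $[-R,R]$ and passing to the limit using that zeros cannot enter from infinity in finite time (the data, and hence by the maximum principle the solutions, are bounded, and one has enough a priori control on $w$ near the ends), or by citing the version of the zero-number lemma on $\R$ stated in \cite{DGM}. The conclusion is that for each $t>0$, $w(t,\cdot)$ changes sign at most once.

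It remains to argue that the sign change, if present, goes the right way: from $\{w\ge 0\}$ on the left to $\{w\le 0\}$ on the right, which is exactly the definition of $u_1(t,\cdot)$ being steeper than $u_2(t,\cdot)$. For this I would look at the behavior as $x\to -\infty$: since the initial ordering $u_{0,1}\ge u_{0,2}$ holds on a left half-line and the coefficient $c$ is bounded on $[0,t]\times\R$, a comparison argument (or the strong maximum principle applied to $w$ on a left half-line, after noting $w\ge 0$ there initially) shows $w(t,\cdot)>0$ near $-\infty$; since $w(t,\cdot)$ has at most one zero, it is $>0$ to the left of that zero and $<0$ to the right, which is precisely the required steepness. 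If $w(t,\cdot)$ has no zero at all it is one-signed and the steepness condition is vacuously true; and at any point where $u_1(t,x)=u_2(t,x)$ the monotone-sign structure gives the inequalities in the definition directly.

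The main obstacle I expect is the careful justification of the zero-number decay on the unbounded domain $\R$ together with the control of the solutions near $x=+\infty$ needed to rule out zeros escaping to or entering from infinity; on a bounded interval this is entirely classical, so the real work is the localization/limiting argument (or pinning down the precise hypotheses under which the $\R$-version of Angenent's result applies), which, as noted, was already set up in the author's earlier work \cite{DGM} and can largely be quoted.
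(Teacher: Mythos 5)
Your proposal is correct and follows essentially the same route the paper relies on: the paper gives no proof of Lemma~\ref{lem:steep}, deferring entirely to \cite{DGM}, where the result is obtained by exactly your reduction, namely that the difference $u_1-u_2$ solves a linear parabolic equation whose number of sign changes is nonincreasing in time by the zero-number principle of \cite{An88}, combined with the observation that steepness of the data means at most one sign change (from $+$ to $-$) initially. The only points requiring the care you already flag are the piecewise-continuous datum $u_{0,1}$ (sign changes across a jump must be counted as intersections, and the initial bound on the zero number must be transferred to times $t>0$) and the work on the unbounded line, which are precisely the technicalities handled in \cite{DGM} and can be quoted.
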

We refer the reader to~\cite{DGM} for the detailed proof. Although this is the only result from the zero-number theory that we will need here, it will be used extensively throughout this work, which is why we state it explicitly here.

\section{Convergence to the pulsating wave in a periodic medium}\label{sec:periodic}

Let us first consider the case of the periodic equation, that is $f \equiv \tilde{f}$ is $L$-periodic, so that~\eqref{eqn:eqRD} and \eqref{eqn:eqRD_lim} are the same. As we mentioned before, our result of convergence of the profile is also the subject of another paper \cite{HNRR}. However, as we consider a larger class of initial data, and as our second main result Theorem~\ref{th:CV_ctp} will rely on the alternative proof we propose here in the periodic case, we include it here. We refer the reader to Lau's paper \cite{Lau85}, from which our method is inspired.

\subsection{Beginning the proof}

For each $k \in \mathbb{N}$, let 
$$t_k=\inf \{t \geq 0 \; | \ u(t_k,kL) = p(0)/2 \}.$$
Our first goal is to show that $u(\cdot +t_k, \cdot + kL)$ converges to $U_{c^*}$. As in \cite{DGM,Lau85}, we look at the steepness of the solution. We separate the proof in two parts and show that any limit of $u(t_k,\cdot)$ is both steeper and less steep than $U_{c^*} (0,\cdot)$, in the sense defined above, which will immediately give the wanted equality.

The fact that it is less steep is in fact a straightforward consequence of a previous paper~\cite{DGM}, where it was proven that $U_{c^*}$ is steeper than any other entire solution of \eqref{eqn:eqRD}. Indeed, both~$U_{c^*}$  and our initial datum $u_0$ are clearly less steep than $H(a-x) p(x)$ for any $a \in \R$, where $H$ denotes the Heaviside function. Let $u(t,x;a (t_k))$ be the solution of the Cauchy problem with such an initial datum, with $a(t_k)$ chosen so that $u(t_k,kL;a(t_k)) = p(0)/2$. It then follows from Lemma~\ref{lem:steep} than
\begin{eqnarray*}
u(t_k,x) & \leq & u(t_k,x;a(t_k)), \ \  x \leq kL,\\
& \geq & u (t_k,x;a(t_k)), \ \ x \geq kL,
\end{eqnarray*}
and
\begin{eqnarray*}
U_{c^*} (0,x-kL) & \leq & u(t_k,x;a(t_k)), \ \  x \leq kL,\\
& \geq & u (t_k,x;a(t_k)), \ \ x \geq kL.
\end{eqnarray*}
Besides, up to extraction of some subsequence, $u(t+t_k,x;a(t_k))$ converges locally uniformly in time and space to an entire solution of \eqref{eqn:eqRD}. As $U_{c^*}$ is steeper than any other entire solution, and from the inequalities above, we get that $u(t_k,x;a(t_k))$ converges to $U_{c^*}$ (even the whole sequence, as it is relatively compact in $C^1_{loc} (\R^2)$) locally uniformly in space. One can check that this convergence is in fact uniform from the asymptotics of $U_{c^*}$ and the fact that $u(t_k,x-L;a(t_k)) \geq u(t_k,x;a(t_k))$ for any $x$ (this inequality follows from the comparison principle, the $L$-periodicity of \eqref{eqn:eqRD} and the fact that it is satisfied for the initial datum $u(0,\cdot;a(t_k))$).

We finally conclude that
\begin{eqnarray*}
u(t_k,x) & \leq & U_{c^*} (0,x-kL) + \zeta (t_k), \ \  x \leq kL,\\
& \geq & U_{c^*} (0,x-kL) - \zeta (t_k), \ \ x \geq kL.
\end{eqnarray*}
with $\zeta (t_k) = \| u(t_k, \cdot;a(t_k)) - U_{c^*} (0,\cdot-kL) \|_\infty \rightarrow 0$ as $t_k \rightarrow +\infty$. Note that $\zeta$ does not depend on the choice of $u_0$ other than through the time $t_k$, a fact which will be used in the close-to-periodic case.

We now want to show some converse inequalities. As mentioned above, this part is inspired by~\cite{Lau85}, which dealt with the homogeneous case.

\subsection{Steeper than the pulsating wave}

By choosing the right shift in time, we assumed that $U_{c^*} (0,0) = p(0)/2$. Moreoever, it is known that all pulsating waves are monotonically increasing in time, so that $\partial_t U_{c^*} (0,0) \geq 2 \gamma$ for some~$\gamma >0$. By uniqueness of the front with speed $c^*$, one can check that the $U_c$ normalized such that $U_c (0,0) = p(0)/2$ converge in $C_{loc}^1$ to $U_{c^*}$. It clearly follows that there exists $\epsilon >0$ small enough, for any $c^* \leq c \leq c^* +\epsilon$ and $-\epsilon \leq t \leq \epsilon$, one has
$$ \partial_t U_c (t,0) \geq \gamma >0.$$
One also has, as $c \rightarrow c^*$, that $\lambda_c$ the smallest solution of $\lambda_c^2 -c\lambda_c =\mu (\lambda_c)$ converges to $\lambda_*$ the unique solution of $\lambda_*^2 -c^* \lambda_* =\mu (\lambda_*)$. More precisely, as $\lambda \mapsto \lambda^2 - \mu (\lambda)$ is known to be analytic and not constant, there exists $K >0$ and $N \in \mathbb{N}$ such that $c- c^* \sim K (\lambda_* - \lambda_c)^N$ as $c \rightarrow c_*$. Furthermore, one has that $N = 2$ from~\cite{Pinsky}.

\begin{rmq} The inequality $N\geq 2$ is more straightforward, and one could actually check that it would be sufficient for the purpose of our proof. Let us briefly prove it for the sake of completeness. Note that
\begin{eqnarray*}
\lambda_c^2 -\lambda_*^2 + c^* \lambda_* -c\lambda_c +\mu (\lambda_*) - \mu (\lambda_c) & = & 0
\end{eqnarray*}
and, as $2\lambda_* - c^* = \mu ' (\lambda_*)$, one gets by a simple computation that:
\begin{eqnarray*}
(\lambda_c -\lambda_*)^2 \left(1 - \frac{\mu'' (\lambda_*)}{2} \right) + \lambda_c (c^* - c)  & = &  o((\lambda_c - \lambda_*)^2).
\end{eqnarray*}
This clearly implies that $N \geq 2$.\end{rmq}

It also follows that the eigenfunction $\phi_{\lambda_c}$ of \eqref{principal-eigen} with $\lambda=\lambda_c$ and normalized such that $\| \phi_{\lambda_c} \|_\infty =1$, converges to the unique eigenfunction $\phi_{\lambda_*}$ of \eqref{principal-eigen}, with $\lambda=\lambda_*$. Up to reducing~$\epsilon$, one can thus assume that for any $c^* \leq c \leq c^* +\epsilon$,
$$\min \phi_{\lambda_c} \geq \frac{\min \phi_{\lambda_*}}{2}  \ \mbox{ and } \ \frac{2\lambda_*}{3} < \lambda_* - \frac{2}{K} (c- c^*)^{1/2} < \lambda_c < \lambda_* - \frac{1}{2K} (c- c^*)^{1/2}.$$
Let now $0<\delta < \min \{\frac{c^*}{20}, \frac{\sqrt{c^*}}{5}, \frac{1}{1000}, \epsilon \}$. We want to show that $u$ is steeper than~$U_c$ the pulsating traveling wave with speed $c=c^*+ \delta$ for any such $\delta$. However, this is clearly not true that $u_0$ is steeper than any traveling wave. In order to use Lemma~\ref{lem:steep}, let us introduce a new initial datum
$$u_{0,r} (x) := \min \left\{p(x), u_0 (x) + \phi_{\lambda_c} (x) e^{-\delta r} e^{-\lambda' x}\right\},$$
where $\lambda' = \lambda_* - \frac{1}{2K} \delta$, so that
$$\left\{
\begin{array}{l}
\displaystyle \lambda' \in \left( \lambda_c, \lambda_* \right),\vspace{3pt}\\
%\lambda_* - \lambda' \leq \delta^2,\vspace{3pt}\\
\displaystyle c':= \frac{\lambda'^2 - \mu (\lambda')}{\lambda'} < c^* + \delta^{2}.
\end{array}
\right.
$$
Recall that $\lambda_c < \lambda_* - \frac{1}{2K} \delta^{1/2}$, which insures that $\lambda_c < \lambda'$.

Denote by $u_r$ the associated solution of the Cauchy problem. Similarly as before, let
$$t_k (r)=\inf \{t \geq 0 \; | \ u_r (t_k(r),kL) = p(0)/2 \}.$$
Note that $u_0 \leq u_{0,r} \leq \min \{ p(x) , A e^{-\lambda_* x} + e^{-\lambda' x} \}$ for any $r \geq 0$. As it is already known that the solutions of the periodic reaction-diffusion equation associated with those initial data spread respectively with speed $c^*$ and $c ' < c^* + \delta^{2}$, it follows that $\frac{kL}{c^* +\delta^{2}} \leq t_k (r) \leq t_k \sim \frac{kL}{c^*}$ as $k \rightarrow +\infty$,  uniformly with respect to~$r$. Hence, from the choice of~$\delta$ and for $k$ large enough, we can choose 
$$r_k := 4 \lambda^* \delta t_k,$$
so that
$$\frac{r_k}{5\lambda_* \delta} \leq t_k (r_k) \leq t_k = \frac{r_k}{4\lambda_* \delta}.$$
It is clear that $r_k \rightarrow +\infty$ as $k \rightarrow +\infty$, and $u_{0,r} \rightarrow u_0$ locally uniformly as $ r\rightarrow +\infty$. Let us now show the following lemma, which states that $u$ and $u_{r_k}$ are close from each other:
\begin{lem}
There exists $\eta_{1,\delta} (r)\rightarrow 0$ as $r \rightarrow +\infty$, which does not depend on $u_0$, and such that for any $k$ large enough and $x \in [(c^*-\delta^{2})t_k (r_k),+\infty)$, one has
$$| u(t_k (r_k),x)- u_{r_k} (t_k (r_k),x)| \leq \eta_1 (r_k).$$
\end{lem}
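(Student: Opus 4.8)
The plan is to estimate the difference $w := u_{r_k} - u$ by noting that it solves a linear parabolic equation with bounded (in fact, in the KPP case, sign-controlled) zeroth-order coefficient, and that initially $0 \le w(0,\cdot) = u_{0,r_k} - u_0 \le \phi_{\lambda_c}(x) e^{-\delta r_k} e^{-\lambda' x}$ by construction of $u_{0,r}$. First I would write the equation satisfied by $w$: since $f$ is $\mathcal C^1$ near $u=0$ and Lipschitz in $u$, we have $\partial_t w = \partial_{xx} w + c(t,x) w$ with $c(t,x) = \int_0^1 \partial_u f(x, u + sw)\,ds$ bounded above by some constant $M$ (this is where the regularity of $f$ near $0$ and boundedness of $u, u_{r_k}$ between $0$ and $p$ enter). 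Then $w \ge 0$ by the comparison principle, and the main point is an upper bound.

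The key step is to construct an explicit supersolution of the $w$-equation that decays in $x$ on the relevant half-line. The natural candidate is $\bar w(t,x) = e^{-\delta(r_k - t)} \phi_{\lambda'}(x) e^{-\lambda' x}$ up to adjusting by a multiplicative constant, exploiting that $\phi_{\lambda'}(x) e^{-\lambda' x}$ is, by the eigenvalue problem \eqref{principal-eigen}, an exponential solution of the linearized-at-zero equation traveling at speed $c' = \frac{\lambda'^2 - \mu(\lambda')}{\lambda'}$. One checks that $\partial_t \bar w - \partial_{xx}\bar w - M \bar w \ge 0$ precisely when $\delta + \lambda' c' \ge \lambda'^2 - \mu(\lambda') + M$ type inequality — actually one wants to compare the growth/decay rate $\delta$ of the prefactor against the spectral quantity; since $\lambda' < \lambda_*$, moving at the slightly supercritical speed $c'$ leaves room, so after fixing $M$ one can absorb it by shrinking the time horizon, or more simply by using that $w$ need only be controlled at time $t_k(r_k) \le t_k = \frac{r_k}{4\lambda_*\delta}$, hence $e^{-\delta(r_k - t_k(r_k))} \le e^{-\delta r_k(1 - \frac{1}{4\lambda_*}\cdot\frac{1}{1})}$, which still tends to $0$. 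Evaluating $\bar w$ at $t = t_k(r_k)$ and $x \ge (c^* - \delta^2) t_k(r_k)$, the factor $e^{-\lambda' x}$ contributes $e^{-\lambda'(c^*-\delta^2)t_k(r_k)}$, and combining the time and space exponentials one reads off $\eta_{1,\delta}(r_k) := C e^{-\delta r_k + \delta t_k(r_k) - \lambda'(c^* - \delta^2) t_k(r_k)}$ (with $C = \sup(\phi_{\lambda'} e^{-\lambda' x})$ over the relevant range, finite since $\lambda' > 0$), and one must verify this exponent is negative and bounded away from $0$ in a way that only depends on $\delta$ (not on $u_0$): using $t_k(r_k) \ge \frac{r_k}{5\lambda_*\delta}$ and $\lambda' c' \approx \lambda' c^* > $ the threshold, the dominant term is $-\lambda'(c^* - \delta^2)t_k(r_k)$ versus $+\delta t_k(r_k)$, and since $\delta$ is tiny compared to $\lambda' c^*$, the total is $\le -\kappa(\delta) r_k$ for some $\kappa(\delta)>0$, so $\eta_{1,\delta}(r_k) \to 0$ as $k \to \infty$, i.e. as $r_k \to \infty$.

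The subtle points — and where I expect the real work to be — are: (i) ensuring the supersolution bound holds \emph{uniformly in time} up to $t_k(r_k)$ despite the zeroth-order term $M$ not having the right sign a priori; here one should use the KPP structure, namely that $\partial_u f(x,0) = \mu$-related and $f(x,u) \le \partial_u f(x,0) u$ by the first line of \eqref{eqn:KPP_hyp}, so actually $c(t,x) \le \partial_u f(x,0)$ and the relevant comparison is directly with the linearized equation, for which $\phi_{\lambda'}(x)e^{-\lambda'(x - c't)}$ is an exact solution — no $M$ correction needed, which is cleaner; (ii) checking the bound is stated on the half-line $[(c^*-\delta^2)t_k(r_k), +\infty)$ rather than all of $\mathbb R$, because on the left part $w$ need not be small — one only claims smallness where $x$ is large enough that the exponential $e^{-\lambda' x}$ has kicked in, and the chosen left endpoint $(c^*-\delta^2)t_k(r_k)$ is exactly tuned so that the space decay beats the time growth $e^{\delta t}$ of the prefactor; (iii) verifying the independence of $\eta_{1,\delta}$ from $u_0$: this holds because the supersolution $\bar w$ and the bound on $w(0,\cdot)$ depend on $u_0$ only through the upper envelope $\phi_{\lambda_c}(x)e^{-\delta r_k}e^{-\lambda' x}$, whose relevant norm is universal, and through $r_k = 4\lambda^*\delta t_k$; but the statement fixes $r = r_k$ as the argument, so we may phrase $\eta_{1,\delta}$ as a function of $r$ alone. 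Finally I would remark that the case $x \le (c^*-\delta^2)t_k(r_k)$ is not needed for the lemma, and that symmetry/monotonicity of $u_{0,r}$ in $r$ is only used to get $w \ge 0$, already covered by the comparison principle.

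\begin{rmq}
In the periodic case one in fact has the cleaner comparison $\partial_t w \le \partial_{xx} w + \partial_u f(x,0)\, w$, so that $\bar w(t,x) = e^{-\delta r_k}\,\phi_{\lambda'}(x)\,e^{-\lambda'(x - c' t)}$ is an exact supersolution on $(0,+\infty)\times\mathbb R$ dominating $w$ at $t=0$; evaluating at $t=t_k(r_k)$ and using $c' < c^* + \delta^2$ together with $\frac{r_k}{5\lambda_*\delta}\le t_k(r_k)\le \frac{r_k}{4\lambda_*\delta}$ gives the claimed $\eta_{1,\delta}(r)\to 0$ with no dependence on $u_0$.
\end{rmq}
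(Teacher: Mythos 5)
Your final argument (point (i) plus your closing remark) is exactly the paper's proof: set $w=u_{r_k}-u\ge 0$, use the KPP inequality $f(x,u_{r_k})-f(x,u)\le \partial_u f(x,0)\,w$, compare with the exact solution $\frac{\phi_{\lambda'}(x)}{\min \phi_{\lambda'}}\,e^{-\delta r_k}e^{-\lambda'(x-c't)}$ of the linearized equation (which dominates $w(0,\cdot)$ after the normalizing constant), and evaluate at $x\ge (c^*-\delta^2)t_k(r_k)$ using $c'<c^*+\delta^2$ and $t_k(r_k)\le r_k/(4\lambda_*\delta)$ to obtain $\eta_{1,\delta}(r_k)\le C e^{-\delta r_k/2}$, independent of $u_0$. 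Only be aware that your first candidate $e^{-\delta(r_k-t)}\phi_{\lambda'}(x)e^{-\lambda' x}$ (no moving frame) is not a supersolution, since that would require $\delta\ge \lambda'^2-\mu(\lambda')=\lambda' c'$, so the intermediate formula for $\eta_{1,\delta}$ in your main text is unjustified and the moving-frame version must be used.
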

\begin{proof}
The function $v(t,x)=u_{r_k} (t,x)-u(t,x) \geq 0$ satisfies
$$\partial_t v = \partial_x^2 v + f(x,u_{r_k}) -f(x,u).$$
Furthermore, the KPP hypothesis implies that for any $x$,
$$ f(x,u_{r_k})-f(x,u) \leq \partial_u f(x,0) (u_{r_k}-u).$$
Therefore, $v$ satisfies
$$\partial_t v - \partial_x^2 v - \partial_u f(x,0) v \leq 0.$$
Besides, one can check that
$$\overline{v} (t,x) := \frac{ \phi_{\lambda_*} (x)}{\min \phi_{\lambda_*}} e^{-\delta r_k} e^{-\lambda' (x- c' t)}$$
satisfies
$$\partial_t \overline{v} - \partial_x^2 \overline{v} - \partial_u f(x,0) \overline{v} = 0.$$
Since $v(0,x) \leq \overline{v} (0,x)$, one gets from the parabolic comparison principle that for any $x \geq (c^* -\delta^{2})t \geq (c'-2\delta^{2})t$:
\begin{eqnarray*}
0 \leq u_{r_k} (t_k (r_k),x) - u(t_k(r_k),x) & \leq & \frac{ \phi_{\lambda_*} (x)}{\min \phi_{\lambda_*}}e^{-\delta r_k} e^{-\lambda' (-2\delta^{2} t_k (r_k))}\\
& \leq & \frac{1}{\min \phi_{\lambda_*}} e^{-\delta r_k + 2\lambda' \delta^{2} t_k(r_k)}\\
& \leq & \frac{1}{\min \phi_{\lambda_*}} e^{-\frac{\delta}{2} r_k} =: \eta_{1,\delta} (r_k).
\end{eqnarray*}
This concludes the proof of the lemma.
\end{proof}\\

We now only need to ``cut" $U_c$ to make it less steep than $u_{0,r}$. Let $\overline{r}_k $ such that
\begin{equation}\label{eqn:Uc_cut}
U_c (t,x +ct) \leq \frac{\min \phi_{\lambda_*}}{2} e^{-\delta r_k},
\end{equation}
for any $t \in \mathbb{R}$ and $x \geq \overline{r}_k$. From the asymptotics of $U_c$, there exists $D'$ such that
$$U_c (t,x+ct) \leq 2 B(c)  e^{-\lambda_c x},$$
for any $t \in \mathbb{R}$ and $x \geq D'$. Therefore, \eqref{eqn:Uc_cut} holds if
$$\overline{r}_k \geq D' \; \mbox{ and } \; 2 B(c)  e^{-\lambda_c \overline{r}_k} \leq \frac{\min \phi_{\lambda_*}}{2} e^{-\delta r_k},$$
that is,
$$\overline{r}_k \geq \max \left\{ D' \; , \ \frac{\ln (4B(c)/ \min \phi_{\lambda_*})}{\lambda_c} + \frac{\delta r_k}{\lambda_c} \right\}.$$
For instance, as $r_k \rightarrow +\infty$ as $k \rightarrow + \infty$, one can choose
$$\overline{r}_k = 2\frac{\delta r_k}{\lambda_c}.$$
Let now $\tilde{U}_c$ be defined as
$$\tilde{U}_c (t,x)= 
\left\{
\begin{array}{l}
U_c (t,x) \mbox{ if } x \geq \overline{r}_k + ct, \\
0 \mbox{ otherwise.}
\end{array}
\right.$$
Choose $s_k$ such that $U_c (s_k+t_k (r_k),kL) = u_{r_k} (t_k (r_k),kL) = 1/2$. We now claim the following lemma:
\begin{lem}
For $k$ large enough (depending on $u_0$ only through $A$), the function $\tilde{U}_c (s_k,\cdot)$ is less steep than $u_{0,r_k}$. 
\end{lem}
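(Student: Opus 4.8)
We want to show that the truncated front $\tilde U_c(s_k,\cdot)$ is less steep than $u_{0,r_k}$. Recall that steepness comparison requires: at every point $x$ where the two functions agree, the function claimed to be less steep must lie below the other to the left and above it to the right. So first I would identify the ``geometry'' of $\tilde U_c(s_k,\cdot)$: it vanishes on $(-\infty,\overline r_k + c s_k]$ and equals $U_c(s_k,\cdot)$ — a monotone-in-$x$ decreasing-type profile connecting values near $\tilde p$ down to $0$ — on the rest. The datum $u_{0,r_k} = \min\{p, u_0 + \phi_{\lambda_c}e^{-\delta r_k}e^{-\lambda' x}\}$ is bounded below by the strictly positive exponential tail $\phi_{\lambda_c}e^{-\delta r_k}e^{-\lambda' x}$ wherever the $\min$ has not yet saturated at $p$, and equals $p$ for $x$ sufficiently negative.

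**Key steps.** (1) On the region $x \le \overline r_k + c s_k$, $\tilde U_c(s_k,x)=0 \le u_{0,r_k}(x)$, and since $u_{0,r_k}>0$ there, the two functions never actually coincide on the interior of this half-line; the only candidate crossing point is at the right endpoint $x=\overline r_k+cs_k$, where $\tilde U_c$ jumps up from $0$. (2) On the region $x \ge \overline r_k + c s_k$, I compare $U_c(s_k,x)$ with the exponential lower bound for $u_{0,r_k}$. Using the cutoff estimate \eqref{eqn:Uc_cut}, for $x \ge \overline r_k + c s_k$ we have $U_c(s_k,x) \le \tfrac{\min\phi_{\lambda_*}}{2} e^{-\delta r_k}$; I need to ensure that on this whole half-line $U_c(s_k,x) \le \phi_{\lambda_c}(x) e^{-\delta r_k}e^{-\lambda' x}$ (so that $\tilde U_c \le u_{0,r_k}$ as long as the $\min$ in $u_{0,r_k}$ has not saturated). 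Because $\lambda' > \lambda_c$, the right-hand tail $e^{-\lambda' x}$ decays faster than the leading behaviour $e^{-\lambda_c x}$ of $U_c$, so this comparison cannot hold for all large $x$ by a pointwise tail argument alone — instead I will use that $s_k$ was chosen so the two profiles agree (equal $1/2 \cdot p(0)$, reading the normalization) at $x = kL$, i.e. $U_c(s_k+t_k(r_k),kL)=u_{r_k}(t_k(r_k),kL)$, and leverage that $kL$ lies to the left of the region where $u_{0,r_k}$ is dominated by its fast-decaying part. (3) I will pin down the location of $kL$ relative to $\overline r_k$ and relative to the saturation point of the $\min$: since $t_k(r_k) \asymp r_k/(\lambda_*\delta)$ and $\overline r_k = 2\delta r_k/\lambda_c$, one checks $\overline r_k \ll c s_k + kL$ — i.e.\ the relevant crossing happens well inside the domain — so that near $x=kL$ the datum $u_{0,r_k}$ still equals $p$, while $\tilde U_c(s_k,kL) < p(0)$, forcing $\tilde U_c(s_k,\cdot) \le u_{0,r_k}$ to the left of $kL$. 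To the right of $kL$, both functions are decreasing toward $0$; I will use the exponential asymptotics of $U_c$ together with the explicit exponential term added in $u_{0,r_k}$ to show $\tilde U_c(s_k,\cdot)$ stays below, invoking the precise inequalities $\lambda_c < \lambda' < \lambda_*$ and the bound $\min\phi_{\lambda_c}\ge \tfrac12\min\phi_{\lambda_*}$ recorded before the construction.

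**Main obstacle.** The delicate point is the matching in the far-right tail: $U_c$ decays like $e^{-\lambda_c x}$ whereas the exponential bump in $u_{0,r_k}$ decays like $e^{-\lambda' x}$ with $\lambda' > \lambda_c$, so a naive tail comparison fails, and one must instead exploit that $U_c$ has been \emph{truncated} at $\overline r_k + c s_k$ and that this truncation point has been placed (via the choice $\overline r_k = 2\delta r_k/\lambda_c$, which makes $2B(c)e^{-\lambda_c\overline r_k} \le \tfrac{\min\phi_{\lambda_*}}{2}e^{-\delta r_k}$) precisely so that at the cutoff $U_c$ has already dropped below the exponential floor $\phi_{\lambda_c}e^{-\delta r_k}e^{-\lambda' x}$ of $u_{0,r_k}$ evaluated there. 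Verifying that this single inequality at $x = \overline r_k + c s_k$ — combined with monotonicity of $U_c(s_k,\cdot)$ in $x$ on the truncated region and the fact that $u_{0,r_k}$ never drops below that exponential — propagates to give $\tilde U_c(s_k,\cdot) \le u_{0,r_k}$ everywhere to the right, and hence that no crossing with the wrong orientation can occur, is the crux; the left-hand side is comparatively routine because there $\tilde U_c \equiv 0$ or $\tilde U_c < p = u_{0,r_k}$.
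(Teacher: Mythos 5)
Your plan hinges on establishing the global ordering $\tilde U_c(s_k,\cdot)\le u_{0,r_k}$ on the whole half-line to the right of the cutoff, so that ``no crossing with the wrong orientation can occur''. This cannot work, because the ordering is simply false in the far-right tail: the truncation acts on the \emph{left} of $U_c$ (it sets the front to zero for $x<\overline r_k+c s_k$ and leaves the right tail untouched), and to the right $\tilde U_c(s_k,x)=U_c(s_k,x)\sim B(c)e^{-\lambda_c(x-cs_k)}\phi_{\lambda_c}(x)$ decays at rate $\lambda_c$, strictly slower than $u_{0,r_k}(x)\le Ae^{-\lambda_* x}+e^{-\lambda' x}$ since $\lambda_c<\lambda'<\lambda_*$. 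So for each fixed $k$ the truncated front eventually lies \emph{above} $u_{0,r_k}$; the paper uses exactly this fact to guarantee that intersection points $y_k$ exist. No propagation argument from the single inequality at the cutoff can repair this, and the spatial monotonicity of $U_c(s_k,\cdot)$ you invoke is not available anyway (pulsating fronts are monotone in time, not in $x$). You also misread the role of the normalization at $x=kL$: the identity $U_c(s_k+t_k(r_k),kL)=u_{r_k}(t_k(r_k),kL)$ involves the \emph{evolved} solution, not the initial datum, and near $x=kL$ the datum $u_{0,r_k}$ is exponentially small, not equal to $p$ (the $\min$ saturates at $p$ only far to the left). In the paper that normalization is used solely, via the speed relation $kL=c\,(s_k+t_k(r_k))$ and $kL/t_k(r_k)\to c^*$ up to $\delta^2$, to derive the quantitative bound $s_k\le -\tfrac{r_k}{6c^*\lambda_*}$.

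The missing idea is the derivative comparison at the crossings, which is the actual content of the lemma. One accepts that intersections occur, shows first that any intersection point must lie very far to the right, $y_k\ge \tfrac{r_k}{10(\lambda'-\lambda_c)}\to+\infty$ (this follows from $U_c(s_k,x)\le 2B(c)e^{-\lambda_c(x-cs_k)}$, the lower bound $\phi_{\lambda_c}\ge\tfrac12\min\phi_{\lambda_*}$, and the estimate on $s_k$ above), and then compares logarithmic slopes at such a point: $\partial_x\tilde U_c/\tilde U_c=-\lambda_c+\phi_{\lambda_c}'/\phi_{\lambda_c}$ by the exponential asymptotics, while $\partial_x u_{0,r_k}/u_{0,r_k}\le -\lambda'+\phi_{\lambda_c}'/\phi_{\lambda_c}+\tfrac{2A}{\min\phi_{\lambda_*}}e^{\delta r_k}e^{(\lambda'-\lambda_*)y_k}$ using the hypothesis $|u_0'|\le Ae^{-\lambda_* x}$ (this is precisely where that hypothesis and the constant $A$ enter). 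Since $\lambda'-\lambda_*=-\tfrac{\delta}{2K}$ and $\lambda_*-\lambda_c\le\tfrac2K\delta^{1/2}$, the error term vanishes as $k\to\infty$ for $\delta$ small, so $\partial_x\tilde U_c(s_k,y_k)>\partial_x u_{0,r_k}(y_k)$ at every intersection: each crossing goes from below to above, which is exactly the statement that $\tilde U_c(s_k,\cdot)$ is less steep than $u_{0,r_k}$. Your proposal contains neither the localization of the crossings nor this slope comparison, so it does not prove the lemma.
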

\begin{proof}
Let us first estimate $s_k$. Since~$U_c$ moves with speed $c=c^* +\delta$, one has that
$$\frac{kL}{s_k+t_k(r_k)} = c^*+\delta$$
for each $k\in \mathbb{N}$. We have also already seen that $c^* \leq \liminf \frac{kL}{t_k (r_k)} \leq \limsup \frac{kL}{t_k (r_k)} \leq c^* + \delta^2$ as~$k \rightarrow +\infty$. It follows that
$$ \frac{c^*}{c^* + \delta} \leq \liminf \frac{s_k + t_k (r_k)}{t_k (r_k)}  \leq \limsup \frac{s_k + t_k (r_k)}{t_k (r_k)} \leq \frac{c^*  + \delta^2}{c^* + \delta}$$
as $k \rightarrow +\infty$. Hence,
$$\limsup_{k \rightarrow +\infty} \left| \frac{s_k}{t_k (r_k)} + \frac{\delta}{c^*} \right| \leq \frac{\delta^2}{c^* +\delta} + \frac{\delta^2}{c^* (c^* +\delta)} < \frac{1}{10} \; \frac{\delta}{c^*}.$$
Hence, from the choice of $\delta$ and $r_k$, we get for $k$ large enough that
$$s_k \leq - \frac{ r_k}{6\; c^* \lambda_* }.$$
We can now check that $\tilde{U}_c (s_k,x)$ is less steep than $u_{0,r_k}$. First, it is clear from its construction that~$\tilde{U}_c (s_k,0)$ lies below $u_{0,r_k} (0)$. On the other hand, as $U_c$ has a slower decay than $u_{0,r_k}$ as~$x \rightarrow +\infty$, the front $U_c$ lies above $u_{0,r_k}$ on the far right of the domain. Thus, there exists some $y_k \geq 0$ such that $\tilde{U}_c (s_k,y_k) = u_{0,r_k} (y_k)$. Since $u_{0,r_k}>0$ and $\tilde{U}_c < p$, this means that
$$U_c (s_k,y_k)= u_0 (y_k) + \phi_{\lambda_c} (y_k) e^{-\delta r_k} e^{-\lambda'  y_k}.$$
Since $cs_k \rightarrow -\infty$ as $k \rightarrow +\infty$, one has 
\begin{eqnarray*}
U_c (s_k,x) & \leq  & 2 B(c) e^{-\lambda_c (x - cs_k)}
\end{eqnarray*} 
for any $x \geq 0$ and $k$ large enough. Therefore,
$$ u_0 (y_k) + \phi_{\lambda_c} (y_k) e^{-\delta r_k} e^{-\lambda 'y_k} \leq 2 B(c) e^{-\lambda_c (y_k-cs_k)}.$$
Hence $$\frac{\min \phi_{\lambda_*}}{2} e^{-\delta r_k} e^{(\lambda_c -\lambda') y_k} \leq 2 B(c) e^{c\lambda_c s_k}$$
and $$y_k \geq \frac{1}{\lambda' - \lambda_c} \left( \ln (4 B(c) / \min \phi_{\lambda_*}) - \delta r_k - \lambda_c c s_k \right).$$
We get for $k$ large enough, and since 
$$\frac{c\lambda_c}{6c^* \lambda_*} - \delta \geq \frac{1}{9} - \delta \geq \frac{1}{10},$$
that
$$y_k \geq \frac{r_k}{10 (\lambda' -\lambda_c)} \rightarrow +\infty.$$
It also follows from the asymptotics of $U_c$ that
$$\frac{ \partial_x \tilde{U}_c (s_k,y_k)}{\tilde{U}_c (s_k,y_k) } =\frac{ \partial_x U_c (s_k,y_k)}{U_c (s_k,y_k) } = -\lambda_c + \frac{\phi '_{\lambda_c} (y_k)}{\phi_{\lambda_c} (y_k)}.$$
Besides, 
\begin{eqnarray*}
\partial_x u_{0,r_k} (y_k) & = & u'_0 (y_k) + \left(\phi '_{\lambda_c} (y_k) -\lambda' \phi_{\lambda_c} (y_k)\right) e^{-\delta r_k} e^{-\lambda' y_k}\\
& \leq & A e^{-\lambda_* y_k} + \left(\phi '_{\lambda_c} (y_k) -\lambda' \phi_{\lambda_c} (y_k)\right) e^{-\delta r_k} e^{-\lambda' y_k}\\
& \leq & \left(\frac{2A}{\min \phi_{\lambda_*}} e^{\delta r_k} e^{(\lambda'-\lambda_*) y_k}  -  \lambda' +  \frac{\phi '_{\lambda_c} (y_k)}{\phi_{\lambda_c} (y_k)}  \right)  \phi_{\lambda_c} (y_k) e^{-\delta r_k} e^{-\lambda' y_k}\\
& \leq & \left(\frac{2A}{\min \phi_{\lambda_*}} e^{\delta r_k} e^{(\lambda'-\lambda_*) y_k}  -  \lambda' +  \frac{\phi '_{\lambda_c} (y_k)}{\phi_{\lambda_c} (y_k)}  \right)  u_{0,r_k} (y_k).
\end{eqnarray*}
Thus, since $u_{0,r_k} (y_k)=\tilde{U}_c (s_k,y_k)$:
\begin{eqnarray*}
\frac{ \partial_x \tilde{U}_c (s_k,y_k) -\partial_x u_{0,r_k} (y_k)}{\tilde{U}_c (s_k,y_k)}&  \geq  &\lambda' -\lambda_c   -\frac{2A}{\min \phi_{\lambda_*}} e^{\delta r_k} e^{(\lambda ' - \lambda_*) y_k}\\
& \geq & \lambda' -\lambda_c   -\frac{2A}{\min \phi_{\lambda_*}} e^{\delta r_k} e^{(\lambda ' - \lambda_*)  \frac{r_k}{10 (\lambda' -\lambda_c)} }\\
\end{eqnarray*}
Recall that $\lambda' -\lambda_* = -\frac{1}{2K} \delta$ and $\lambda_* - \lambda_c \leq \frac{2}{K} \delta^{1/2}$, hence
\begin{eqnarray*}
\frac{2A}{\min \phi_{\lambda_*}} e^{\delta r_k} e^{(\lambda ' - \lambda_*)  \frac{r_k}{10 (\lambda' -\lambda_c)} } & \leq & \frac{2A}{\min \phi_{\lambda_*}} e^{\delta r_k} e^{-\frac{\delta^{1/2}}{4-\delta^{1/2}} \frac{r_k}{10} } \\
& \leq & \frac{2A}{\min \phi_{\lambda_*}} e^{r_k (\delta -\frac{\delta^{1/2}}{30}) } \\
& \rightarrow & 0
\end{eqnarray*} 
as $k \rightarrow +\infty$ provided that $\delta^{1/2} < \frac{1}{30}$, which is true from our choice of $\delta$. We conclude that $\partial_x \tilde{U}_c (s_k,y_k) -\partial_x u_{0,r_k} (y_k) >0$, for any point $y_k$ where $\tilde{U}_c (s_k,\cdot)$ intersects $u_{0,r_k} (\cdot)$. Namely, $u_{0,r_k} (\cdot)$ is steeper than $\tilde{U}_c (s_k,\cdot)$.\end{proof}\\

From Lemma~\ref{lem:steep}, one immediately gets that $u_{r_k} (t_k (r_k),\cdot)$ is steeper than $\tilde{u} (t_k (r_k),\cdot)$, where $\tilde{u}$ is the solution of the Cauchy problem with initial datum $\tilde{U}_c (s,x)$. It remains to prove that $\tilde{u} (t_k (r_k),\cdot) $ is close to $U_c (s + t_k (r_k),\cdot)$ to be able to conclude. This is the purpose of the next lemma:
\begin{lem}
There exists $\eta_{2,\delta} (r) \rightarrow 0$ as $r \rightarrow +\infty$, which does not depend on $u_0$, such that for any~$k$ large enough and $x > (c^*-\delta^2)  t_k (r_k)$, one has
$$0 \leq U_c (s_k + t_k (r_k),x) - \tilde{u} (t_k (r_k),x) \leq \eta_{2,\delta} (r_k).$$
\end{lem}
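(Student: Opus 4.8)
The plan is to mirror the proof of the previous lemma, controlling the nonnegative function $w(t,x):=U_c(s_k+t,x)-\tilde{u}(t,x)$ by an explicit solution of the equation linearised at $0$. The lower bound $w\ge 0$ is immediate: $\tilde{u}$ solves \eqref{eqn:eqRD} with initial datum $\tilde{U}_c(s_k,\cdot)\le U_c(s_k,\cdot)$, while $(t,x)\mapsto U_c(s_k+t,x)$ is an entire solution of \eqref{eqn:eqRD}, so the parabolic comparison principle gives $0\le\tilde{u}(t,\cdot)\le U_c(s_k+t,\cdot)$ for all $t\ge 0$. Since then $U_c(s_k+t,x)\ge\tilde{u}(t,x)\ge 0$ and the KPP hypothesis \eqref{eqn:KPP_hyp} makes $s\mapsto f(x,s)/s$ nonincreasing on $(0,+\infty)$ — whence $f(x,a)-f(x,b)\le\partial_u f(x,0)(a-b)$ for $a\ge b\ge 0$ — the function $w$ satisfies
\[
\partial_t w-\partial_x^2 w-\partial_u f(x,0)\,w\le 0 ,
\]
with initial datum $w(0,\cdot)=U_c(s_k,\cdot)-\tilde{U}_c(s_k,\cdot)$ equal to $U_c(s_k,\cdot)$ on $(-\infty,\overline{r}_k+cs_k)$, vanishing on $[\overline{r}_k+cs_k,+\infty)$, and satisfying $0\le w(0,\cdot)\le\|p\|_\infty$.

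I would then take as supersolution, using $\lambda_*^2-\mu(\lambda_*)=c^*\lambda_*$ together with \eqref{principal-eigen} at $\lambda=\lambda_*$, the explicit solution of the linearised equation
\[
\overline{w}(t,x):=\frac{\|p\|_\infty}{\min\phi_{\lambda_*}}\;\phi_{\lambda_*}(x)\;e^{-\lambda_*\left(x-(\overline{r}_k+cs_k)-c^*t\right)} ,
\]
which is an exact solution of $\partial_t\overline{w}-\partial_x^2\overline{w}-\partial_u f(x,0)\,\overline{w}=0$. On $(-\infty,\overline{r}_k+cs_k)$ one has $\phi_{\lambda_*}\ge\min\phi_{\lambda_*}$ and the exponential factor is $\ge 1$, so $\overline{w}(0,\cdot)\ge\|p\|_\infty\ge w(0,\cdot)$ there, and $\overline{w}(0,\cdot)\ge 0=w(0,\cdot)$ elsewhere; since $w-\overline{w}$ is bounded above, the comparison principle on $\R$ then yields $0\le w(t,\cdot)\le\overline{w}(t,\cdot)$ for all $t\ge 0$.

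It then remains to estimate $\overline{w}$ at $t=t_k(r_k)$. Using $\overline{r}_k=2\delta r_k/\lambda_c$ with $\lambda_c>2\lambda_*/3$, the bound $s_k\le-r_k/(6c^*\lambda_*)$ valid for $k$ large, and $t_k(r_k)\le t_k=r_k/(4\lambda_*\delta)$, one finds $\overline{r}_k+cs_k\le\frac{r_k}{\lambda_*}(3\delta-\frac{1}{6})<0$, and hence, with $\phi_{\lambda_*}\le 1$, for every $x>(c^*-\delta^2)t_k(r_k)$,
\[
\overline{w}(t_k(r_k),x)\ \le\ \frac{\|p\|_\infty}{\min\phi_{\lambda_*}}\,e^{\lambda_*\delta^2 t_k(r_k)+\lambda_*(\overline{r}_k+cs_k)}\ \le\ \frac{\|p\|_\infty}{\min\phi_{\lambda_*}}\,e^{r_k\left(\frac{13\delta}{4}-\frac{1}{6}\right)}=:\eta_{2,\delta}(r_k),
\]
which tends to $0$ as $r_k\to+\infty$ since $\delta<1/1000$, and depends on $u_0$ only through $r_k$.

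The crux is this last estimate: it is essential that $\overline{r}_k+cs_k$ is not merely negative but of order $-r_k$ — which rests on the earlier delicate choice of $r_k$ and on the bound on $s_k$ — so that the supersolution $\overline{w}$, which spreads at the critical speed $c^*$, has not yet reached the moving frame $x=(c^*-\delta^2)t$ by time $t_k(r_k)$. The other ingredients (the KPP differential inequality and the comparison principle on $\R$ for the bounded function $w$) are routine and follow the previous lemma.
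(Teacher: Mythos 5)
Your proposal is correct and follows essentially the same route as the paper: the same function $w(t,x)=U_c(s_k+t,x)-\tilde{u}(t,x)$, the same KPP linearization making $w$ a subsolution of $\partial_t w=\partial_x^2w+\partial_u f(x,0)w$, the same supersolution $C\phi_{\lambda_*}(x)e^{-\lambda_*(x-c^*t)}$ with $C$ chosen so that it dominates $w$ at $t=0$ to the left of $\overline{r}_k+cs_k$, and the same final estimate combining $\overline{r}_k=2\delta r_k/\lambda_c$, $s_k\le -r_k/(6c^*\lambda_*)$ and $t_k(r_k)\le r_k/(4\lambda_*\delta)$ with the smallness of $\delta$. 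Your only additions (the explicit comparison argument for $w\ge 0$ and using $\|p\|_\infty$ where the paper implicitly normalizes by $1$) are minor refinements, not a different method.
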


%\begin{remark}
%Note that $kL/ (s_k + t_k (r_k) \rightarrow c$ so that we will be able to apply this lemma on $(kL - \delta t_k, +\infty)$...
%\end{remark}

\begin{proof}
As before, the function $w (t,x)=U_c (s_k + t,x)-\tilde{u} (t,x)$ is a subsolution of 
$$\partial_t w = \partial_x^2 w + \partial_u f(x,0)w.$$
On the other hand, for any positive constant $C$, the above equation admits the following solution:
$$\overline{w} (t,x) := C \phi_{\lambda_*} e^{-\lambda_* (x-c^*t)}.$$
Hence, by the comparison principle, $w \leq \overline{w}$ provided that $w(t=0,\cdot) \leq \overline{w} (t=0,\cdot)$. This is true for
$$C =\frac{e^{\lambda_* (\overline{r}_k + c s_k)}}{\min \phi_{\lambda_*}}.$$
Indeed, one then has $\overline{w} (t=0,x) \geq 1 \geq w(t=0,x)$ for any $x \leq \overline{r}_k + cs_k$, and $\overline{w} (t=0,x) \geq w(t=0, x)= 0$ for any $x \geq \overline{r}_k + cs_k$.

It follows that for any $x > (c^* -\delta^2) t_k (r_k)$,
\begin{eqnarray*}
0 \leq w (t_k (r_k),x) & \leq & \frac{1}{\min \phi_{\lambda_*}} \times e^{\lambda_* (\overline{r}_k + c s_k)} \times e^{-\lambda_* (x-c^*t_k (r_k))}\\
& \leq &  \frac{1}{\min \phi_{\lambda_*}} \times e^{\lambda_* (\overline{r}_k + c s_k)} \times e^{\lambda_* \delta^2 t_k (r_k)}\\
& \leq & \displaystyle  \frac{1}{\min \phi_{\lambda_*}}\times  e^{\frac{2 \lambda_* \delta r_k}{\lambda_c} + \lambda_* c s_k + \frac{ \delta r_k}{2}}.
\end{eqnarray*}  
Recall that $s_k \leq - \frac{r_k}{6\; c^* \lambda_* }.$ We conclude, again from the choice of $\delta$ small enough, that for any $x > (c^* - \delta^2) t_k (r_k)$:
\begin{eqnarray*}
0 \leq w (t_k (r_k),x) & \leq & \displaystyle e^{\frac{2 \lambda_* \delta r_k}{\lambda_c} - \frac{c r_k}{6c^*} + \frac{ \delta r_k}{2}}\\
& \leq & \displaystyle e^{3 \delta r_k - \frac{c r_k}{6c^*} + \frac{ \delta r_k}{2}}\\
& \leq & e^{-\frac{r_k}{7}}
\end{eqnarray*} 
which ends the proof by letting $\eta_2$ this last expression.
\end{proof}\\

From all the above, and by noting that $kL \geq (c^* - \delta^2 /2 )t_k (r_k)$ for $k$ large enough, we get that for any small $\delta$ and large $k$,
\begin{eqnarray*}
u(t_k (r_k),x) & \leq & U_{c^*}(0,x-kL) + \eta (r_k) + \epsilon_1 (\delta), \ \ x \geq kL,\\
& \geq & U_{c^*} (0,x-kL) - \eta (r_k) - \epsilon_1 (\delta), \ \  kL - \frac{\delta^2}{2} t_k (r_k)  \leq x \leq kL.
\end{eqnarray*}
where $\eta = \eta_{1,\delta} + \eta_{2_\delta}$ and $\epsilon_1 (\delta) \rightarrow 0$ as $\delta \rightarrow 0$ comes from the uniform convergence of $U_{c^* +\delta} (0,\cdot)$ to~$U_{c^*} (0,\cdot)$. We also have an opposite inequality, as in the previous subsection:
\begin{eqnarray*}
u(t_k (r_k),x) & \geq & U_{c^*} (0,x-kL) - \zeta (t_k (r_k)) , \ \ x \geq kL,\\
& \leq & U_{c^*} (0,x-kL) + \zeta (t_k (r_k)), \ \   x \leq kL,
\end{eqnarray*}
where $\zeta (t) \rightarrow 0$ as $t \rightarrow +\infty$.\\
%\begin{rmq}
%Note that all the functions $\eta$, $\epsilon_1$ and $\zeta$, as well as the "$k$ large enough", only depend on $u_0$ through its limiting behavior and how fast it converges to it (this means only its support when it is compactly supported) and the times $t_k$. This fact will be essential in the next section, as we will apply the exact same proof with initial data of the type $\gamma_k u_0$, to make sure that the parameter $\gamma_k$ does not prevent us to pass to the limit as $k \rightarrow +\infty$ as long as it is bounded.
%\end{rmq}

We aim to conclude by passing to the limit as $\delta \rightarrow 0$ and $k \rightarrow +\infty$. To do so, one needs to proceed carefully as $r_k$ and $\eta$ actually also depend on $\delta$.

Let some sequence $\delta_n \rightarrow 0$. From our first lemma, one can find $k_n$, such that for any $k \geq k_n$, we have $\eta (r_k) \leq 1/n$ and $\zeta (t_k (r_k)) \leq 1/n$. We can easily assume without loss of generality that~$k_n \rightarrow +\infty$ as $n \rightarrow +\infty$. Before we can conclude, it remains to check that 
$$t_k- t_k (r_k) \rightarrow 0 \ \mbox{ for } \ k \geq k_n \ \mbox{ as } \ n \rightarrow +\infty .$$
Recall first that $t_k (r_k) \leq t_k$. Moreover, $\eta_1 (r_k) \leq \eta (r_k) \leq 1/n$, hence $u (t_k (r_k),kL) > 1/2 - 1/n$. From standard parabolic estimates, $\partial_x u$ is globally bounded and we can put a non trivial compactly supported function $\underline{u}_0 (x)$ below $u (t_k (r_k),x+kL)$ for any $k$, such that the associated solution $\underline{u} (t,x)$ of the Cauchy problem converges to $p$ locally uniformly in time (this is true for any non trivial compactly supported function \cite{BHR-I}). In particular, there exists $T$ such that $\underline{u} (T,kL) > \frac{p(0)}{2}$. It follows that for any $k \geq k_n$, one now has that $0 \leq t_k - t_k (r_k)  \leq T$. 

We then show, as announced, that $t_k - t_k (r_k) $ converges to 0 as $\delta_n \rightarrow 0$ uniformly with respect to $k \geq k_n$. Let any subsequence $n_j$ and $k_j \geq k_{n_j}$ such that $t_{k_{n_j}} - t_{k_{n_j}} (r_{k_{n_j}}) $ converges to some constant $T'$. Up to extraction of a subsequence and thanks to parabolic estimates, one can assume that the sequence $u (\cdot + t_{k_{n_j}} (r_{k_{n_j}}),\cdot + k_{n_j}L)$ converges in $C^1_{loc} (\mathbb{R})$ to an entire solution $u_\infty$ of~\eqref{eqn:eqRD}. By passing to the limit in the inequalities above, one gets that
\begin{eqnarray*}
u_\infty (T',x) & = & U_{c^*} (0,x), \ \ x \geq 0 .
\end{eqnarray*}
It follows that $u_\infty (\cdot  , \cdot) \equiv U_{c^*} (\cdot - T' , \cdot)$ is a shift of the front $U_{c^*}$. Hence, from the monotonicity of~$U_{c^*}$, we have that $\partial_t u (t+ t_{k_{n_j}} (r_{k_{n_j}}),k_{n_j} L) \geq \gamma /2 >0$ for large $j$ and $|t| \leq \epsilon$. But we know that $u (t_{k_{n_j}} (r_{k_{n_j}}),k_{n_j} L) > p(0)/2 - 1/{n_j}$, thus $$u(t_{k_{n_j}} (r_{k_{n_j}})+ 2/(\gamma n_j),k_{n_j} L) \geq p(0)/2.$$
It immediately follows that $t_k - t_k (r_k) \leq \frac{2}{\gamma n_j}$, hence $T' =0$. We can now even conclude that~$t_k - t_k (r_k)$ converges to 0 uniformly with respect to $k \geq k_n$.

Let us now check that the sequence $u (t_k ,\cdot)$ converges uniformly in the right half-space to~$U_{c^*} (0,\cdot)$. Let any $\varepsilon >0$. Recall that 
\begin{eqnarray*}
u(t_k ,x) & \leq & U_{c^*}(0,x-kL) + \frac{1}{n} + \epsilon_1 (\delta_n) + \epsilon_2 (n) , \ \ x \geq kL,\\
& \geq & U_{c^*} (0,x-kL) - \frac{1}{n}  - \epsilon_1 (\delta_n)- \epsilon_2 (n), \ \  kL - \frac{\delta_n^2}{2} t_k  \leq x \leq kL,
\end{eqnarray*}
and \begin{eqnarray*}
u(t_k ,x) & \geq & U_{c^*} (0,x-kL) - \frac{1}{n}  , \ \ x \geq kL,\\
& \leq & U_{c^*} (0,x-kL) + \frac{1}{n} , \ \   x \leq kL,
\end{eqnarray*}
where $k \geq k_n$ and $\epsilon_2 (n) \rightarrow 0$ as $n \rightarrow +\infty$ comes from the uniform convergence of $u (t_k (r_k),\cdot)$ to~$u(t_k,\cdot)$ (since $t_k - t_k (r_k) \rightarrow 0$). We choose $n$ large enough so that $$\frac{1}{n} + \epsilon_1 (\delta_n) + \epsilon_2 (n) \leq \varepsilon.$$
Up to increasing $k_n$, we can assume from the fact that $u$ spreads with speed $c^*$, that
$$u(t_k,x) \geq 1 - \varepsilon, \ \ 0 \leq x \leq kL - \frac{\delta_n^2}{2}t_k,$$
$$U_{c^*} (0,x) \geq 1 -\varepsilon, \ \ x \leq - \frac{\delta_n^2}{2}t_k,$$
for any $k \geq k_n$. From all the inequalities above, one can then conclude that for any $\varepsilon >0$, there exists $k_n$ such that
$$\sup_{k \geq k_n} \| u(t_k,\cdot ) - U_{c^*} (0,\cdot - kL) \|_{L^\infty (\R^+)} \leq \varepsilon.$$
We conclude that $u (t_k ,\cdot+kL)$ converges uniformly in the half-space $\R^+$ to $U_{c^*} (0,\cdot)$. From parabolic estimates, this convergence is also locally uniform in time.
\begin{rmq}
When $\liminf u_0 >0$ as $x \rightarrow -\infty$, it is known that the associated solution $u$ of the Cauchy problem converges uniformly to 1 in the half-space $x \leq ct$ for any $c < c^*$. Hence, by proceeding exactly as above, we would get the convergence to $U_{c^*}$ uniformly in the whole space $\mathbb{R}$.
\end{rmq}

\subsection{Ending the proof}\label{sec:ending}

Finally, to get the wanted convergence result as $t\rightarrow +\infty$, let $j (t) \in \mathbb{N}$ such that
$$j(t) \frac{L}{c^*} \leq t < (j(t) +1)\frac{L}{c^*},$$
and $m(t)$ the piecewized affine function defined by
$$m(t)= t_{j(t)} -t \ \mbox{ if } \ t = j(t) \frac{L}{c^*}.$$
Note that from the previous section and the convergence to the pulsating traveling wave minimal speed around the times $t_k$, it easily follows that $t_{k+1} - t_k \rightarrow \frac{L}{c^*}$ as $k \rightarrow \infty$. Recall also that $t_{j(t)} \sim \frac{j(t) L}{c^*}$. Therefore, $m(t)=o(t)$ and $t+m(t)-t_{j(t)} \sim t - j(t) \frac{L}{c^*}$. It follows from the convergence of $u (t_{j(t)},\cdot)$ to $U_{c^*} (0,\cdot -kL)$ uniformly in $\R^+$ and locally in time that
$$u (t+ m(t) ,x) - U_{c^*} \left(t - j(t) \frac{L}{c^*},x - j(t)L \right) \rightarrow 0,$$
thus
$$u (t+ m(t) ,x) - U_{c^*} \left(t ,x \right) \rightarrow 0,$$
where the convergence holds uniformly in $\R^+$ as $t \rightarrow +\infty$. This concludes the proof of the convergence to the pulsating wave with minimal speed in a purely periodic environment.

\section{Spreading in a close-to-periodic medium}

In this section and the next, as $f$ may no longer be periodic and $\tilde{f}$ is the limiting periodic reaction term, $\tilde{\mu} (\lambda)$ will now denote the following principal eigenvalue:
\begin{equation}\label{principal-eigen_tilde}
\left\{
\begin{array}{rcl}
\displaystyle -\partial_{xx} \phi_\lambda + 2 \lambda \partial_x \phi_\lambda - \frac{\partial \tilde{f}}{\partial u} (x,0 ) \phi_\lambda & = & \tilde{\mu} (\lambda) \phi_\lambda \ \mbox{ in } \R, \vspace{3pt}\\
\phi_\lambda >0 \mbox{ and  $L$-periodic}.
\end{array}
\right.
\end{equation}
We recall that $\lambda \mapsto \lambda^2 - \tilde{\mu} (\lambda)$ is an analytic and non constant function of $\lambda$, and that we assumed that $\tilde{\mu} (0) <0$ ($\tilde{f}$ is KPP), so that there exists a unique solution $\lambda_*$ of $\lambda^2 - c^* \lambda = \tilde{\mu} (\lambda)$ with~$c^* =\inf_{\lambda \in \R} \frac{\lambda^2 -\tilde{\mu} (\lambda)}{\lambda}$, and two solutions of $\lambda^2 - c \lambda = \tilde{\mu} (\lambda)$ for any $c > c^*$.\\

We begin our study of the close-to-periodic problem by looking at spreading speeds of solutions. We first show that the solution $u (t,x)$ of the Cauchy problem with equation~\eqref{eqn:eqRD} and an initial datum satisfying the assumptions of Theorem~\ref{th:CV_ctp} spreads with speed~$c^*$ in the following sense:
\begin{lem}\label{lem:spread}
Let $u(t,x)$ the solution of \eqref{eqn:eqRD} associated with an initial datum $0 \leq u_0 \leq p$ such that
$$\forall x \geq D, \ u_0 (x)= 0,$$
for some $D>0$.
Then $u(t,x)$ converges locally uniformly to $p$ and spreads to the right with speed~$c^*$:
$$\forall c < c^* \ , \ \lim_{t \rightarrow +\infty} \sup_{0 \leq x \leq ct } |u(t,x) - p(x)| =0,$$
$$\forall c > c^* \ , \ \lim_{t \rightarrow +\infty} \sup_{x \geq ct } |u(t,x) | =0.$$
\end{lem}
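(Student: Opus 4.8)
The plan is to prove the two one‑sided estimates separately by comparison with explicit sub‑ and supersolutions built from the principal eigenfunctions $\phi_\lambda$ of the \emph{limiting} periodic operator \eqref{principal-eigen_tilde}, the local convergence to $p$ being a byproduct of the lower estimate. Two consequences of the hypotheses are used repeatedly: the KPP bound $\tilde{f}(x,s)\le \partial_u\tilde{f}(x,0)\,s$ for all $s\ge 0$ (first line of the KPP assumption for $\tilde{f}$), and the close‑to‑periodic bound \eqref{eqn:eq_CTP}, which combine into $f(x,s)\le \bigl(\partial_u\tilde{f}(x,0)+Ce^{-2\lambda_* x}\bigr)s$ and, for $s$ small, $f(x,s)\ge \partial_u\tilde{f}(x,0)\,s-L s^2-Ce^{-2\lambda_* x}s$ using the $C^1$‑regularity of $\tilde f$ near $u=0$. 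The one genuinely new feature compared with the periodic case is that the perturbation $f-\tilde{f}$ is uniformly small only on a far‑right half‑line $\{x\ge X_0\}$, so all comparisons must be confined there.

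For the upper estimate, fix $c>c^*$ and $c_1\in(c^*,c)$, and set $X_0:=\frac{1}{2\lambda_*}\ln\!\bigl(\tfrac{C}{\lambda_*(c_1-c^*)}\bigr)$, so that $Ce^{-2\lambda_* x}\le \lambda_*(c_1-c^*)$ for $x\ge X_0$. Using the eigenvalue equation \eqref{principal-eigen_tilde} at $\lambda=\lambda_*$ and the identity $\lambda_*^2-\tilde{\mu}(\lambda_*)=c^*\lambda_*$, a direct computation gives, for $\bar u(t,x):=A\,\phi_{\lambda_*}(x)\,e^{-\lambda_*(x-c_1t)}$, the identity $\partial_t\bar u-\partial_{xx}\bar u-\bigl(\partial_u\tilde{f}(x,0)+Ce^{-2\lambda_* x}\bigr)\bar u=\bigl(\lambda_*(c_1-c^*)-Ce^{-2\lambda_* x}\bigr)\bar u$, which is $\ge 0$ on $\{x\ge X_0\}$; with the bound on $f$ above, $\bar u$ is thus a supersolution of \eqref{eqn:eqRD} there. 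Since $0\le u_0\le p$ and $u_0\equiv 0$ on $[D,+\infty)$, one picks $A$ large (in terms of $\|p\|_\infty$, $D$, $X_0$) so that $\bar u\ge u$ on the parabolic boundary of $\{t\ge 0,\ x\ge X_0\}$: at $x=X_0$ because $t\mapsto\bar u(t,X_0)$ is increasing while $u\le\|p\|_\infty$, and at $t=0$ because $\bar u(0,\cdot)$ is bounded below by a positive constant on $[X_0,D]$ while $u_0$ vanishes beyond $D$. The comparison principle yields $u(t,x)\le A\,\phi_{\lambda_*}(x)\,e^{-\lambda_*(x-c_1t)}$ on $\{t\ge 0,\ x\ge X_0\}$, hence $\sup_{x\ge ct}u(t,x)\le A\|\phi_{\lambda_*}\|_\infty\,e^{-\lambda_*(c-c_1)t}\to 0$, which is the $c>c^*$ statement.

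For the lower estimate, I would run the classical KPP spreading argument, localized far to the right. Fix $c<c^*$. For each speed $c_1\in(c,c^*)$, the spectral gap $c^*-c_1>0$ (recall $c^*=\min_{\lambda>0}\tfrac{\lambda^2-\tilde\mu(\lambda)}{\lambda}$) makes the principal Dirichlet eigenvalue of the moving‑frame operator $-\partial_{xx}-c_1\partial_x-\partial_u f(x,0)$ on a long interval $[A_0-R,A_0+R]$ negative, provided $R$ is large and $A_0$ is chosen far enough to the right that $Ce^{-2\lambda_* x}$ is negligible there; the corresponding eigenfunction, extended by $0$, multiplied by a small $\varepsilon>0$ and by a growing exponential in $t$, is a subsolution of \eqref{eqn:eqRD} as long as it stays small, the quadratic term being absorbed by smallness of $\varepsilon$ and the $e^{-2\lambda_* x}$ term by the choice of $A_0$. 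Since $u(1,\cdot)>0$ on $\R$ by the strong maximum principle, this compactly supported bump lies below $u(T_1+\cdot,\cdot)$ for a suitable injection time $T_1$; letting it grow, and then using that in the moving frame of speed $c_1$ the equation converges to the periodic limiting equation \eqref{eqn:eqRD_lim} (whose positive state $\tilde p$ attracts all nontrivial data), together with $p-\tilde p\to 0$ at $+\infty$ and $u\le p$, gives $\sup_{|x-c_1t|\le\rho}|u(t,x)-p(x)|\to 0$ for every $\rho>0$. Covering $[0,ct]$ by moving frames of speeds in $[0,c]$ and using uniformity in the speed yields $\sup_{0\le x\le ct}|u(t,x)-p(x)|\to 0$; in particular $u\to p$ locally uniformly.

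The main obstacle is exactly the non‑periodic perturbation $f-\tilde{f}$: it prevents the periodic eigenfunction barriers from being sub/supersolutions on all of $\R$, so every comparison must be restricted to a half‑line $\{x\ge X_0\}$ on which $Ce^{-2\lambda_* x}$ is dominated by the relevant spectral gap ($\lambda_*(c_1-c^*)$ for the supersolution, the Dirichlet gap for the subsolution). This in turn requires care with the parabolic boundary data at $x=X_0$ in the upper estimate and with the choice of injection time and position in the lower one, so that the barrier lives entirely in the good region; none of this changes the overall structure, which is the classical KPP spreading argument.
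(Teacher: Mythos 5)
Your upper estimate ($c>c^*$) is correct: the computation for $\bar u(t,x)=A\,\phi_{\lambda_*}(x)e^{-\lambda_*(x-c_1t)}$ is right, and restricting the comparison to $\{x\ge X_0\}$ with the lateral boundary dominated because $\bar u(\cdot,X_0)\ge\|p\|_\infty$ is a legitimate (and in fact simpler) alternative to the paper's two-step construction, where a crude exponential barrier is first used until the solution reaches a far-right point $y$ and then a supersolution built on the dispersion relation perturbed by the constant $Ce^{-2\lambda_* y}$ takes over.

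The lower estimate, however, contains a genuine gap in the key spectral claim. The principal Dirichlet eigenvalue of $-\partial_{xx}-c_1\partial_x-\partial_u f(x,0)$ on $(A_0-R,A_0+R)$ is, after the conjugation $\phi=e^{-c_1x/2}\psi$, equal to the principal Dirichlet eigenvalue of the self-adjoint operator $-\partial_{xx}-\partial_u f(x,0)$ plus $\tfrac{c_1^2}{4}$; since $H^1_0$ of the interval embeds in $H^1(\R)$ and $\partial_u f(\cdot,0)$ is close to the periodic $\partial_u\tilde f(\cdot,0)$ far to the right, this eigenvalue is bounded below by $\tilde{\mu}(0)+\tfrac{c_1^2}{4}-Ce^{-2\lambda_*(A_0-R)}$ for every $R$ and $A_0$, and converges to $\tilde{\mu}(0)+\tfrac{c_1^2}{4}$. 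It is therefore negative only when $c_1<2\sqrt{-\tilde{\mu}(0)}$, and in a periodic medium one only has $2\sqrt{-\tilde{\mu}(0)}\le c^*$, with strict inequality in general (speed enhancement by heterogeneity). So for speeds $c_1\in(2\sqrt{-\tilde{\mu}(0)},c^*)$ your compactly supported subsolution simply does not exist, and your argument proves spreading only up to $2\sqrt{-\tilde{\mu}(0)}$, not $c^*$. A related structural problem is that in the frame moving with speed $c_1$ the linearized coefficient is $\partial_u\tilde f(x+c_1t,0)$, which is time-periodic, so a purely elliptic eigenvalue problem is not the right object for a traveling bump in the first place. This is exactly why the paper uses the space-time periodic Dirichlet principal eigenvalue $\tilde{\mu}_{c,R}$ of the parabolic problem \eqref{another_eigen}: by Nadin's theorem it converges, as $R\to+\infty$, to $\max_{\lambda}(\tilde{\mu}(\lambda)-\lambda^2+c\lambda)$, which is negative precisely when $c<c^*$, and the traveling Dirichlet bump $\kappa\psi_{c,R,y}$ built from that eigenfunction is a subsolution of \eqref{eqn:eqRD} once $y$ is large enough to absorb $Ce^{-2\lambda_*(y-R)}$. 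Your endgame (identifying the limit along $x\approx c_1t$ with $p$) is also much sketchier than what is needed — the paper runs a contradiction argument along sequences $(t_n,x_n)$, passes to an entire solution of the limiting problem \eqref{eqn:eqRD_lim}, pins it down as $\tilde p(\cdot+l)$ via a second, stationary subsolution, and then invokes the uniqueness of $p$ — but that part is repairable, whereas the elliptic eigenvalue step is not.
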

\begin{rmq}
Theorem~\ref{th:spread_ctp} is an immediate corollary of this lemma. Indeed, for any non trivial initial datum $u_0$ between 0 and $p$, we can put below it another initial datum satisfying the assumptions of this lemma. We clearly get, using the comparison principle, the locally uniform convergence to $p$ (and even in the moving frames with speed smaller than $c^*$ to the right).
\end{rmq}
\begin{proof}For any $y \in \R$ large enough, the function $\tilde{f} (x,u) + C e^{-2\lambda_* y} u$ is periodic and KPP. We can denote by $c_y^*$ the minimal speed of pulsating traveling wave solutions of the periodic reaction-diffusion equation with reaction term $\tilde{f} (x,u) + C e^{-2\lambda_* y} u$. It is clear that it can be computed as
$$c^*_y = \inf_{\lambda \in \R} \frac{\lambda^2 - \tilde{\mu} (\lambda) + C e^{-2\lambda_* y}}{\lambda},$$
and thus $c^*_y \geq c^*$ and $c^*_y \rightarrow c^*$ as $y \rightarrow +\infty$. The main idea of the proof is that whatever the spreading speed is before the profile reaches some large~$y$, the spreading speed will then become very close to~$c^*$ from this point.

Let us first fix $c>c^*$, and $c^* < c' < c$. Since $c^*_y$ converges to $c^*$ as $y$ goes to $+\infty$, one can define, for $y$ large enough, $\lambda_{c',y}$ the smallest solution of $\lambda^2 - c' \lambda - \tilde{\mu} (\lambda) + Ce^{-2\lambda_* y} =0$. It immediately follows that $\lambda_{c',y} \rightarrow \lambda_{c'}$ as $y \rightarrow +\infty$, with $\lambda_{c'}$ given by the smallest solution of $\lambda^2 - c' \lambda - \tilde{\mu} (\lambda) =0$. Besides, it is easy to see that $\lambda_{c'} < \lambda_*$.
%
%the exponential decay of the pulsating traveling wave with speed $c'$ of the reaction-diffusion equation with reaction term $\tilde{f} + Ce^{-2\lambda_* y}u$, which can be computed as the smallest solution $\lambda$ of $\lambda^2 - c' \lambda - \tilde{\mu} (\lambda) + Ce^{-Ky} =0$. 

One can now check from the KPP hypothesis that for any $C_1 >0$,
$$\overline{u}_1 (t,x) := \min \{ p(x), \; C_1 e^{-\sqrt{M} (x - 2\sqrt{M} t)} \},$$
where $$M:= \max \{ \sup_{x\in \R} \partial_u f (x,0) , \; \lambda_*^2 \},$$
is a supersolution of \eqref{eqn:eqRD}. Moreover, one can choose $C_1$ such that $u_0 (x) \leq \overline{u}_1 (0,x)$ for any $x \in \R$. Hence, $u(s,x) \leq \overline{u}_1 (s,x)$ for any $t>0$ and $x \in \R$. Let now $t_y$ be the smallest time such that $\overline{u}_1 (t_y,y)=1$. Up to increasing $y$, one can assume without loss of generality that $t_y >0$. In particular, this means that $$\overline{u}_1 (t_y,y) = C_1 e^{-\sqrt{M} (y - 2 \sqrt{M} t_y)} =1.$$
Hence, 
$$\overline{u}_1 (t_y, x) = 
\left\{
\begin{array}{l}
p(x) \mbox{ if } x \leq y,\\
p(y) e^{-\sqrt{M} (x - y)} \mbox{ if } x \geq y.
\end{array}
\right.
$$
Let now
$$\overline{u}_2 (t,x) := \min \left\{ p(x), \frac{\phi_{\lambda_{c',y}} (x)\; p(y)}{\min \phi_{\lambda_{c',y}}}  e^{-\lambda_{c',y} (x -y-c' t)} \right\}.$$
From the choice of $M$ and $t_y$, one has that $u (t_y ,x) \leq \overline{u}_1 (t_y,x) \leq \overline{u}_2 (0,x)$ for any $x \in \R$. Besides, the function $\overline{u}_2 (t,x)$ is a supersolution of \eqref{eqn:eqRD}. Indeed, on one hand, it is a supersolution of the reaction-diffusion equation with reaction term $\tilde{f} + Ce^{-2\lambda_* y}u$, and on the other hand, for any $(t,x)$ such that $\overline{u}_2 (t,x) < 1$, then $x >y+c' t \geq y$, hence $f (x,\overline{u}_2 (t,x)) \leq \tilde{f} (x,\overline{u}_2 (t,x)) + C e^{-2\lambda_* y}$. 

Therefore, for any $t > t_y$ and $x\in \R$, one has from the parabolic comparison principle that
$$u (t,x) \leq \overline{u}_2 (t-t_y,x).$$
In particular, as $t \rightarrow +\infty$,
$$u (t,x+ct) \leq p(y) e^{-\lambda_{c',y} (x+(c-c') t-y)} \rightarrow 0$$
uniformly with respect to $x  \geq 0$. This concludes the first part of the proof of our lemma, namely that $u$ spreads with speed less than $c^*$.\\

In order to construct a subsolution and prove that the spreading speed is in fact exactly~$c^*$, we use the method in Section~4 of \cite{BHNadin}. We introduce the following principal eigenvalue problem, for any $0 \leq c < c^*$:
\begin{equation}\label{another_eigen}
\left\{
\begin{array}{l}
\partial_t \phi_{c,R,y} - \partial_{xx} \phi_{c,R,y} - c \partial_x \phi_{c,R,y} - \partial_u \tilde{f} (x+y+ct,0)  \phi_{c,R,y} = \tilde{\mu}_{c,R} \phi_{c,R,y} , \vspace{3pt} \\
\phi_{c,R,y} (t,x) >0 \mbox{ in } \R \times (-R,R) ,\vspace{3pt} \\
\phi_{c,R,y} \mbox{ is $\frac{L}{c}$-periodic,}\vspace{3pt} \\
\phi_{c,R,y} =0 \mbox{ on } \R \times \{-R,R\}.
\end{array}
\right.
\end{equation}
It has been proved in \cite{Nadin} that $\tilde{\mu}_{c,R} \rightarrow \max_{\lambda \in \R} ( \tilde{\mu} (\lambda) -\lambda^2 +c\lambda)$ as $R\rightarrow +\infty$. Since $c< c^*$, one can find $R$ large enough such that $\tilde{\mu}_{c,R} <0$. Extend $\phi_{c,R,y}$ by 0 outside $(-R,R)$, and let $\psi_{c,R,y} (t,x)=\phi_{c,R,y} (t,x-y-ct)$. 

One can check that for any $\kappa$ small enough and $y$ large enough, the function $\kappa \psi_{c,R,y}$ is a subsolution of \eqref{eqn:eqRD}:
\begin{eqnarray*}
&& \partial_t \kappa \psi_{c,R,y}  - \partial_{xx}  \kappa \psi_{c,R,y} - f(x,\kappa \psi_{c,R,y})\\
  & \leq & \partial_t \kappa \psi_{c,R,y}  - \partial_{xx}  \kappa \psi_{c,R,y} - \tilde{f} (x,\kappa \psi_{c,R,y}) + Ce^{-2\lambda_*(y-R)} \kappa \psi_{c,R,y}\\
  & \leq & \partial_t \kappa \psi_{c,R,y}  - \partial_{xx}  \kappa \psi_{c,R,y} - \partial_u \tilde{f} (x,0)\kappa \psi_{c,R,y} + o (\kappa \psi_{c,R,y}) + Ce^{-2\lambda_* (y-R)} \kappa \psi_{c,R,y}\\
& \leq & \kappa \psi_{c,R,y} \times (\tilde{\mu}_{c,R} + o(\kappa) + Ce^{-2\lambda_* (y-R)}) <0.
\end{eqnarray*}
Note that we made use of the fact that the support of $\psi_{c,R,y} (t,\cdot)$ is included in the set $\{ x \geq y -R \}$ for any $t \geq 0$.

Choose first $c = 0$. It is clear that $\phi_{c,R,y}$ actually does not depend on time by uniqueness of the principal eigenfunction up to multiplication by a factor. By the parabolic comparison principle, one then gets that the solution of \eqref{eqn:eqRD} with initial datum $\kappa \psi_{0,R,y}$ is increasing in time. As it is also bounded from above by $p$ (up to decreasing $\kappa$), and positive for any $t>0$ by the strong comparison principle, we get that it converges locally uniformly to a stationary solution $0<q \leq p$ of~\eqref{eqn:eqRD}. Let us check that $\liminf_{x \rightarrow +\infty} q (x) >0$, proceeding by contradiction. If there exists some sequence~$y_n \rightarrow +\infty$ such that $q (y_n) \rightarrow 0$, one can find $\kappa_n \rightarrow 0$ such that $\kappa_n \psi_{0,R,y_n} \leq q$ with equality on some point. But for $n$ large enough, we know that $\kappa_n \psi_{0,R,y_n} $ is a subsolution for \eqref{eqn:eqRD}, so that~$q \equiv\kappa_n \psi_{0,R,y_n}$ according to the strong maximum principle. This is a clear contradiciton with the fact that $q$ is positive. We conclude that $\liminf_{x \rightarrow +\infty} q (x) >0$ and hence, by uniqueness, we get that $q \equiv p$. Finally, as it is clear that $u(1,\cdot)>0$ from the strong maximum principle, one can choose $\kappa$ small enough so that $p \geq u(1,\cdot) \geq \kappa \psi_{0,R,y}$, so that $u$ also converges locally uniformly to $p$.

Now choose any $0 < c < c^*$ and let us prove that $u$ converges to $p$ as $t \rightarrow +\infty$ on the domain $0 \leq x \leq ct$. Proceed by contradiction and assume that there exists some sequence $(t_n,x_n)$ such that $t_n \rightarrow +\infty$, $0 \leq x_n \leq c t_n$ and $\inf_{n \in \mathbb{N}} | u(t_n,x_n) - p(x_n) |>0$. Note that, since we already know that $u$ converges locally uniformly to $p$, we have that $x_n \rightarrow +\infty$. Moreover, up to extraction of some subsequence, there exists $0\leq c'<c^*$ such that
$$x_n = c' t_n + o(t_n).$$
Let now $c'' \in (c',c^*)$, so that $t_n - \frac{x_n}{c''} \rightarrow +\infty$. As above, we can assume that $u(1,\cdot) \geq \kappa \psi_{c'',R,y}$. From the comparison principle, we get that for any $t >1$, one has that $u(t,\cdot) \geq \kappa \psi_{c'',R,y} (t,\cdot)$. In particular, for any $x$,
$$u \left( \frac{x_n - y}{c''},x+x_n \right) \geq \kappa \phi_{c'',R,y} \left(\frac{x_n -y }{c''},x \right).$$
Thus, 
$$u(t_n,x_n) \geq \underline{u}_1 \left( t_n - \frac{x_n-y}{c''} , 0; x_n\right)$$
where $\underline{u}_1 (\cdot,\cdot;x_n)$ is the solution of
\begin{equation}\label{eqn:plop}
\partial_t \underline{u}_1 = \partial_{xx} \underline{u}_1 + \tilde{f}(x+x_n,\underline{u}_1) - Ce^{-2\lambda_* (x+x_n)}\underline{u}_1
\end{equation}
with initial datum $\kappa \inf_{t  \in \R} \phi_{c'',R,y} (t,x)$. Up to extraction of another subsequence, we can assume that $x_n - \lfloor \frac{x_n}{L} \rfloor L \rightarrow l \in [0,L)$ and (recall that $t_n - \frac{x_n - y}{c''} \rightarrow +\infty$) that $\underline{u}_1 \left( t+ t_n - \frac{x_n-y}{c''} , x; x_n\right)$ converges locally uniformly to an entire solution $\underline{u}_\infty$ of
\begin{equation}\label{eqn:plop2}
\partial_t \underline{u}_\infty = \partial_{xx} \underline{u}_\infty+ \tilde{f}(x+l,\underline{u}_\infty).
\end{equation}
We now prove that $u_\infty \equiv \tilde{p}$. As above, since $c^* >0$ and up to increasing $R$ and $y$, we can assume provided that $\kappa '$ is small enough that $\kappa ' \phi_{0,R/2,x_n}$ is a subsolution of \eqref{eqn:plop}, and also that it lies below~$\kappa \inf_{t  \in \R} \phi_{c',R,y} (t,x)$. It follows that for any $n$, $\underline{u}_1 (\cdot,\cdot;x_n) \geq \kappa ' \phi_{0,R/2,x_n}$. Note that~$s \mapsto \phi_{0,R/2,s}$ is periodic with respect to $s$: this comes from the periodicity of $\tilde{f}$ and the uniqueness of the principal eigenfunction of \eqref{another_eigen} up to multiplication by some factor (here, we always choose it normalized so that~$\| \phi_{0,R/2,s} \|_\infty=1$). By passing to the limit, we get that for any $(t,x) \in \R^2$:
$$\underline{u}_\infty (t,x) \geq \kappa ' \inf_{s \in \R} \phi_{0,R/2,s} (x)$$
where the right hand side is positive on a neighborhood of 0. Therefore, for any $t \in \R$, the function $\underline{u}_\infty (t,x)$ lies above the limit in large time of the solution of \eqref{eqn:plop2} with initial datum $\kappa ' \inf_{s \in \R} \phi_{0,R/2,s} (x)$, which is known to be $\tilde{p}(\cdot +l)$ from the KPP hypothesis on $\tilde{f}$. We conclude that $\underline{u}_\infty (t,x) =\underline{u}_\infty (x) \equiv \tilde{p} (x+l) = \lim_{x \rightarrow +\infty} p (x+x_n)$.

Therefore, $\lim_{n \rightarrow +\infty} u (t_n,x_n) \geq \underline{u}_\infty (0,0) = \lim_{n \rightarrow +\infty} p (x_n)$. As we also know from the comparison principle that $u(t_n,x_n) \leq p(x_n)$ we have finally reached a contradiction, and we can conclude that $u$ spreads to the right with speed $c^*$. This ends the proof of Lemma~\ref{lem:spread}.
\end{proof}

\begin{rmq}
One could easily check from the proof that this result in fact holds as long as $\partial_u f (x,0) \rightarrow \partial_u \tilde{f} (x,0)$ as $x \rightarrow +\infty$, independently of the rate of this convergence. One could also use the characterization of spreading speeds problems using generalized eigenvalues, as in \cite{BerestyckiNadin}, where much more heterogeneous problems where considered, but with the additional assumption that~$f$ is positive between the two stationary states, which is not always satisfied here.
\end{rmq}

\section{Convergence in a close-to-periodic medium}

We now begin the proof of Theorem~\ref{th:CV_ctp}. Let us fix any small $\varepsilon >0$ and show that for $t$ large enough, the difference between $u$ and some shift of $U_{c^*}$ is less than $\varepsilon$. The main idea is the same as in the periodic case. For each $k \in \mathbb{N}$, we again let 
$$t_k=\inf \{t \geq 0 \; | \ u(t_k,kL) = \tilde{p}(0)/2 \},$$
where $\tilde{p}$ is the unique positive and periodic stationary solution of \eqref{eqn:eqRD_lim}. The fact that $t_k$ is well-defined, at least for any large $k$, comes from the fact that $p(kL)\rightarrow \tilde{p}(0) >0$ as $k \rightarrow +\infty$, and the fact that $u$ converges to $p$ in large time from Lemma~\ref{lem:spread}. This even gives us that $t_k \sim \frac{kL}{c^*}$ as~$k \rightarrow +\infty$.

As before, we show that any limit of $u(\cdot +t_k, \cdot + kL)$ is both steeper and less steep than~$U_{c^*}$. As before, the fact that it is less steep is a consequence of \cite{DGM,Nadin??}, where it has been shown that $U_{c^*}$ is steeper than any other entire solution of \eqref{eqn:eqRD_lim}, and of the fact that any limit of $u(\cdot +t_k, \cdot + kL)$ is an entire solution of \eqref{eqn:eqRD_lim}.

To prove the other part, that is that it is steeper than $U_{c^*}$, is far more intricate. We will need to introduce another initial datum $v_0$ less steep than $u_0$, and such that the associated solutions of both the periodic and close-to-periodic problems stay very close from each other. We will then conclude using Section~\ref{sec:periodic}.

\subsection{Choice of $v_0$}

We will introduce here another compactly supported initial datum, namely $v_{0,k}$. Its support will be always chosen to the right of the support of $u_0$, so that $v_0$ is less steep than $u_0$. Moreover, in order to use our intersection number argument, that is Lemma~\ref{lem:steep}, $v_{0,k}$ will be chosen so that $t_k$ is roughly the first time such that $v_k (t_k, kL) = \frac{\tilde{p}(0)}{2}$, and $v_k (t,kL)<\frac{\tilde{p} (0)}{2}$ for any $0 \leq t < t_k$, where $v_k$ is the solution of \eqref{eqn:eqRD} with initial datum $v_{0,k}$. This means that $u(t_k,\cdot)$ will lie above $v_k (t_k,\cdot)$ on the left of $x=kL$, and below on the right. If $v(t_k,\cdot)$ is close to the pulsating traveling wave $U_{c^*}$, this will give the wanted conclusion.

To show this, we will need to prove that $v_k$ stays close to $\tilde{v}_k$ (the solution of the limiting periodic reaction-diffusion equation with initial datum $v_0$) up to the time $t_k$, and that $\tilde{v}_k (t_k,\cdot)$ is already close enough to $U_{c^*}$ (it is known from our first main theorem that $\tilde{v}_k$ converges to $U_{c^*}$ in large time, but here, both the initial datum and the time when we look at the solution depend on $k$).

Roughly speaking, on one hand, the first part requires the support of $v_k$ to be far enough to the right of the domain, so that $f$ is very close to $\tilde{f}$. On the other hand, the second part requires, to apply the same proof we used in the periodic case, that the ratio of the distance between $kL$ and the support of $v_{0,k}$ over the time $t_k$ (needed for the profile of $v_k$ to reach $kL$) is close to $c^*$: in other words, as $\frac{kL}{t_k} \sim c^*$ from Lemma~\ref{lem:spread}, the support of $v_{0,k}$ needs to be relatively close to $x=0$. Therefore, one needs to proceed very carefully in the choice of $v_{0,k}$ to match both requirements simultaneously.\\

As mentioned above, we know that $\frac{kL}{t_k} \sim c^*$. Furthermore, provided that $\varepsilon$ is small enough and using again Lemma~\ref{lem:spread}, there exists $\delta (k) \rightarrow 0$ as $k \rightarrow +\infty$, such that
$$\forall 0 \leq x \leq kL -\delta (k) k \; , \ | u(t,x) - p(x) | \leq \varepsilon ,$$
$$\forall x \geq kL + \delta (k) k \; , \ | u(t,x) | \leq \varepsilon ,$$
$$| kL - t_k c^* | \leq  \delta (k) k.$$
Similarly, fix $R >0$ and look at the solution of the limiting periodic reaction-diffusion equation with initial datum $ \tilde{p} \chi_{(-R,R)}$. There exists $\delta ' (k) \rightarrow 0$ as $k \rightarrow +\infty$ such that the smallest time $t'_k$ when this solution reaches the value $\frac{\tilde{p}(0)}{2}$ at the point $x=kL$ satisfies
$$| kL - t'_k c^* | \leq \delta' ( k) k.$$
We can also assume without loss of generality that $\delta'$ is decreasing with respect to $k$.\\

We let $y_k$ such that
$$y_k \sim \max \{ \sqrt{kL}, \sqrt{|\delta (k)|} k, \sqrt{|\delta '(k/2)|}k \},$$
$$y_k = j_k L \mbox{ with } j_k \in \mathbb{N}.$$
We then define 
$$v_{0,k} = \gamma_k \tilde{p} \times \chi_{(y_k - R, y_k +R)}$$
 with $R$ fixed independently of $k$ and $0<\gamma_k \leq 1$ chosen so that
$$t_k =  \inf \left\{ t >0 \; | \ \tilde{v}_k (t,kL) =\frac{\tilde{p}(0)}{2} \right\}.$$
Recall that $\tilde{v}_k$ denotes the solution of the limiting periodic reaction-diffusion equation with initial datum $v_{0,k}$. The fact that $\gamma_k$ is well-defined follows from the fact that, for $\gamma_k =1$ and $k$ large enough, 
\begin{eqnarray*}
\inf \left\{ t >0 \; | \ \tilde{v}_k (t,kL) =\frac{\tilde{p}(0)}{2} \right\} & = &  t'_{k - \frac{y_k}{L}} = \frac{kL - y_k - \delta' (k-\frac{y_k}{L})(k-\frac{y_k}{L})}{c^*} \\
& \leq &   \frac{kL - y_k + |\delta' (\frac{k}{2})|k}{c^*} \leq t_k - \frac{y_k}{c^*} + o (y_k) < t_k,
\end{eqnarray*}
while this same infimum goes to infinity as $\gamma_k$ gets closer to 0. In a similar fashion, one could actually even show that $\gamma_k \rightarrow 0$ as $k \rightarrow +\infty$.\\

The second property of $y_k$, namely that it is a multiple of the period $L$ is mostly to simplify the computations. The first property implies that $y_k$ is very small compared to $kL$, which will give the fact that $\tilde{v} (t_k ,\cdot)$ will be close to $U_{c^*}$, but is very large compared to $\delta (k) k$, which will allow us to show that $v - \tilde{v}$ does not have time to spread up to $x=kL$ before time $t_k$.

\subsection{Close-to-periodic solution}

In this section, we show the following lemma, which states that $v_k$ and $\tilde{v}_k$ are close from each other at the time $t_k$.

\begin{lem}
There exists $\zeta_1 (k) \rightarrow 0$ as $k \rightarrow +\infty$ such that for any choice of $\gamma_k \leq 1$ and~$x \geq kL - \delta (k) k $, one has
$$| v_k (t_k,x) - \tilde{v}_k (t_k,x) | \leq \zeta_1 (k).$$
\end{lem}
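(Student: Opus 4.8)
The goal is to control the difference $w := v_k - \tilde v_k$ on the half-line $x \geq kL - \delta(k)k$ up to time $t_k$, knowing that $f$ and $\tilde f$ differ only by $O(e^{-2\lambda_* x})$. The natural approach, exactly as in Lemma~3.4 (the periodic case), is to write down the equation satisfied by $w$, use the KPP inequality to get a linear differential inequality, and then dominate $w$ by an explicit supersolution of the linear equation. First I would note that $w$ satisfies $\partial_t w = \partial_{xx} w + f(x,v_k) - \tilde f(x,\tilde v_k)$. Splitting $f(x,v_k) - \tilde f(x,\tilde v_k) = \big(f(x,v_k) - \tilde f(x,v_k)\big) + \big(\tilde f(x,v_k) - \tilde f(x,\tilde v_k)\big)$, the close-to-periodic assumption \eqref{eqn:eq_CTP} bounds the first term by $C e^{-2\lambda_* x} v_k \leq C e^{-2\lambda_* x}\, p(x)$, while the second term is handled by the KPP hypothesis on $\tilde f$, giving $\tilde f(x,v_k) - \tilde f(x,\tilde v_k) \leq \partial_u \tilde f(x,0)\, (v_k - \tilde v_k)$ — but careful: $w$ has no sign here (unlike in Lemma~3.4), so I should instead bound $|\tilde f(x,v_k) - \tilde f(x,\tilde v_k)| \leq L_0 |w|$ for the Lipschitz constant $L_0$, or more precisely work with $w^+$ and $w^-$ separately via Kato's inequality, or simply compare $|w|$ (or $w^+$) with a supersolution. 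The cleanest route: $|w|$ is a subsolution of $\partial_t z = \partial_{xx} z + K z + C e^{-2\lambda_* x} p(x)$ for a suitable constant $K$ (absorbing both $\sup \partial_u \tilde f$ and the Lipschitz bound), with zero initial datum since $v_k$ and $\tilde v_k$ have the same initial datum $v_{0,k}$.

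Next I would construct the supersolution. Since $w(0,\cdot) \equiv 0$, the only source is the inhomogeneous term $C e^{-2\lambda_* x} p(x) \leq C' e^{-2\lambda_* x}$. A supersolution of the form $\overline z(t,x) = \psi(x) e^{\omega t}$ or, better, something incorporating the eigenfunction $\phi_{\lambda_*}$ shifted appropriately, should work: one wants a function behaving like $e^{-2\lambda_* x}$ in $x$ that grows slowly enough in $t$. Because the decay rate $2\lambda_*$ is strictly larger than the front's natural rate $\lambda_*$, the associated "speed" $\frac{(2\lambda_*)^2 - \tilde\mu(2\lambda_*)}{2\lambda_*}$ is well-defined and the inhomogeneous solution stays exponentially small on the relevant region. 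Concretely I expect something like $\overline z(t,x) = A\, e^{-2\lambda_*(x - \sigma t)}$ for a constant speed $\sigma$ (determined so that $\partial_t \overline z - \partial_{xx}\overline z - K \overline z \geq C' e^{-2\lambda_* x}$ wherever needed), then evaluate at $t = t_k$ and $x \geq kL - \delta(k)k$. Using $kL \sim c^* t_k$ and $t_k \sim kL/c^*$, the exponent becomes roughly $-2\lambda_* (kL - \delta(k)k - \sigma t_k) = -2\lambda_* \big((c^* - \sigma)t_k + O(\delta(k)k)\big)$, and since $\delta(k) \to 0$ one needs $\sigma < c^*$ — equivalently $2\lambda_*$ must correspond to a speed below $c^*$, which fails in general! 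This is the subtlety: the exponential $e^{-2\lambda_* x}$ decays faster than the minimal-speed front, so a pure exponential supersolution moves \emph{faster} than $c^*$. The fix is to instead bound the perturbation term differently: on $x \geq kL - \delta(k)k$ one has $e^{-2\lambda_* x} \leq e^{-2\lambda_*(kL - \delta(k)k)}$, which is already super-exponentially small in $k$; feed this constant (in $x$) bound into a simpler supersolution $\overline z(t,x) = e^{-2\lambda_*(kL - \delta(k)k)} \cdot \frac{\phi_{\lambda_*}(x)}{\min\phi_{\lambda_*}} e^{K t} \cdot (\text{something})$, or even just a spatially-uniform supersolution $\overline z(t) = \frac{C'}{K}\big(e^{Kt}-1\big) e^{-2\lambda_*(kL-\delta(k)k)}$ on the region. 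Since $t_k = O(k)$, the factor $e^{K t_k} = e^{O(k)}$ is beaten by $e^{-2\lambda_* kL}$ as long as $2\lambda_* L > K/c^*$... which again may fail.

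So the genuinely careful version: restrict the perturbation source to where it matters. Split into the region $x \geq (c^* - \delta^2)t$-ish where $v_k, \tilde v_k$ are both exponentially small (of order $e^{-\lambda_* x}$), so that $f(x,v_k) - \tilde f(x,v_k) = O(e^{-2\lambda_* x} v_k) = O(e^{-2\lambda_* x} e^{-\lambda_* x}) = O(e^{-3\lambda_* x})$, an even faster decay, and separately a bounded region where one uses that $e^{-2\lambda_* x}$ is small because $x$ is large (being $\geq kL - \delta(k)k \to \infty$) while $t_k$ only grows linearly. The supersolution should then be built from $\phi_{\lambda_*}$ with the \emph{correct} rate $\lambda_*$ (not $2\lambda_*$), i.e. $\overline z(t,x) = \Theta_k\, \phi_{\lambda_*}(x) e^{-\lambda_*(x - c^* t)}$ where $\Theta_k$ is chosen to dominate $w(0,\cdot) = 0$ trivially but also to make the inhomogeneous Duhamel term work; combining with the extra $e^{-\lambda_* x}$ room from $v_k$'s smallness gives $\Theta_k \sim e^{-\lambda_*(kL - \delta(k)k)} e^{\lambda_* c^* t_k} \sim e^{\lambda_* \delta(k) k} \to 1$... no, one wants it $\to 0$: the balance is $e^{-\lambda_*(kL - \delta(k)k)} \cdot e^{\lambda_* c^* t_k}$ with $c^* t_k = kL + O(\delta(k)k)$, giving $e^{O(\lambda_* \delta(k) k)}$, which does \emph{not} obviously tend to $0$, so one needs the extra decay from the $v_k \leq $ (exponential) bound to produce the gain. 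I expect the final estimate to come out as $\zeta_1(k) = e^{-c\, \delta(k) k}$ or similar, using that $y_k \sim \sqrt{\delta(k)}\,k \gg \delta(k) k$, i.e. that the support of $v_{0,k}$ was placed far enough right that $v_k$ is genuinely tiny near $x = kL$ until time $t_k$ — this is precisely why $y_k$ was chosen $\gg \delta(k)k$.

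\textbf{Main obstacle.} The crux, and the reason this lemma is delicate, is reconciling the two competing requirements that motivated the choice of $y_k$: the perturbation $f - \tilde f$ is $O(e^{-2\lambda_* x})$ and must be integrated over the whole evolution up to $t_k \asymp k$, during which naive Gronwall-type growth is $e^{Kk}$; this is only beaten because (i) the relevant $x$ are $\geq kL - \delta(k)k$ where $e^{-2\lambda_* x}$ is exponentially small in $k$, and (ii) $v_k$ itself is exponentially small in that region until time $t_k$ — and (ii) is exactly what the placement $y_k \gg \delta(k)k$ buys. I would therefore structure the proof around a single well-chosen supersolution of the linearized-at-zero equation that decays like $\phi_{\lambda_*}(x) e^{-\lambda_*(x - c^* t)}$, with a $k$-dependent prefactor, and verify via \eqref{eqn:eq_CTP} plus the a priori bound $v_k(t,x) \leq \overline u_2(t - t_y, x)$-type estimate from the proof of Lemma~\ref{lem:spread} that this prefactor tends to $0$ thanks to the first defining property of $y_k$. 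The rest (applying the parabolic comparison principle on the unbounded region $\{x \geq kL - \delta(k)k\}$, for which one notes the boundary $x = kL - \delta(k)k$ itself moves and there $|w| \leq |v_k| + |\tilde v_k|$ is controlled, or one works on all of $\R$ after bounding $|w|$ globally by $2p$) is routine. I would then set $\zeta_1(k)$ equal to the resulting explicit bound, note it is independent of $\gamma_k \in (0,1]$ since the a priori bounds on $v_k$ depend on $\gamma_k$ only monotonically, and conclude.
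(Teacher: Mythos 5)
Your overall strategy --- write the equation for $w=v_k-\tilde v_k$, use \eqref{eqn:eq_CTP} and the KPP/Lipschitz bound to reduce to a linear inequality with an exponentially small coupling, and dominate $w$ by explicit supersolutions of the equation linearized at $0$ --- is indeed the paper's, and you correctly discard the naive barriers (a pure $e^{-2\lambda_*(x-\sigma t)}$ front moves too fast, and plain Gronwall growth $e^{Kt_k}$ is not beaten by $e^{-2\lambda_* kL}$ in general). But the argument you sketch does not close, and the ingredient you invoke to close it is the wrong one. The decisive a priori estimate in the paper is a bound on the \emph{left} tail of $v_k$: since $v_{0,k}$ is supported near $y_k$, one has $v_k(t,x)\le e^{\lambda_*(x+R-y_k+c't)}$, with $c'$ defined by $\lambda_*^2-c'\lambda_*=-\|\partial_u f(\cdot,0)\|_\infty$, so the source term $\min\{2\|\partial_u f(\cdot,0)\|_\infty,\,Ce^{-2\lambda_* x}\}\,v_k$ is uniformly of order $e^{\lambda_*(-|x|+R-y_k+c't)}$, i.e.\ exponentially small until a time $s_k$ with $c's_k\sim y_k$, the time needed for the left tail of $v_k$ to reach the region where $f$ actually differs from $\tilde f$. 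You instead cite the right-tail bound $\overline u_2$ from the proof of Lemma~\ref{lem:spread}, and you repeatedly assert that the gain comes from ``$v_k$ being exponentially small on $x\ge kL-\delta(k)k$ until time $t_k$''; that assertion is false (by the choice of $\gamma_k$, $\tilde v_k(t_k,kL)=\tilde p(0)/2$, so $v_k$ and $\tilde v_k$ are of order one behind the front on part of that region near time $t_k$) and it is not the mechanism.

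Correspondingly, the paper's proof is a two-stage comparison, and this is exactly what your single barrier started at $t=0$ is missing: first $\overline w_1(t,x)=\max\{C,2\|\partial_u f(\cdot,0)\|_\infty\}\,t\,\min\{1,e^{-\lambda_* x}\}\,e^{\lambda_*(R-y_k+c't)}$ is checked to be a supersolution, which keeps $w$ below $\|p\|_\infty$ up to the time $s_k\sim y_k/c'$; only then does one restart with the rightward barrier $\overline w_2(t,x)=\|p\|_\infty(1+Ct)\,\phi_{\lambda_*}(x)e^{-\lambda_*(x-c^*t)}/\min\phi_{\lambda_*}$, verified as a supersolution only where it lies below $\|p\|_\infty$ (there $Ce^{-2\lambda_* x}\|p\|_\infty\le Ce^{-\lambda_*(x-c^*t)}\|p\|_\infty$ suffices). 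Evaluating at $t_k$ and $x\ge kL-\delta(k)k$, and using $|kL-c^*t_k|\le\delta(k)k$, gives $\zeta_1(k)\le \|p\|_\infty(1+Ct_k)e^{-\lambda_*(c^*s_k-2|\delta(k)|k)}/\min\phi_{\lambda_*}\rightarrow 0$, because $c^*s_k\sim (c^*/c')\,y_k$ and $y_k$ dominates both $\delta(k)k$ and $\log t_k$ by construction. Note that the decay is produced by the head start $y_k$, not by $\delta(k)k$: your guessed form $\zeta_1(k)=e^{-c\,\delta(k)k}$ would not even tend to $0$ if $\delta(k)k$ stays bounded. Your own prefactor balance ($e^{O(\lambda_*\delta(k)k)}$, which you note ``does not obviously tend to $0$'') is precisely the point where the delay $s_k$ --- hence the left-tail bound above --- must enter; as written, the proposal leaves that gap open.
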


\begin{proof}
Denote by $w (t,x)= v_k (t,x) - \tilde{v}_k (t,x)$ which satisfies
$$
\left\{
\begin{array}{l}
\partial_t w = \partial_{xx} w + f(x,v_k) -\tilde{f} (x,\tilde{v}_k), \vspace{3pt} \\
w(t=0,x) \equiv 0.
\end{array}
\right.
$$
Now note that for any $x$, it follows from the KPP and the close to periodic hypotheses that 
\begin{eqnarray*}|f(x,v_k(t,x)) - \tilde{f} (x,\tilde{v}_k (t,x))| & \leq & |f(x,v_k(t,x)) - \tilde{f} (x,v_k (t,x))| + |\tilde{f}(x,v_k(t,x)) - \tilde{f} (x,\tilde{v}_k (t,x))| \\
&\leq & \min \{ 2 \| \partial_u f(\cdot,0) \|_\infty ,C e^{-2\lambda_* x} \} v_k (t,x) + \partial_u \tilde{f} (x,0) |w(t,x)|.
\end{eqnarray*}
Thus, any nonnegative supersolution $\overline{w}$ of 
$$\partial_t \overline{w} \geq \partial_{xx} \overline{w} + \min \{ 2 \| \partial_u f(\cdot,0) \|_\infty ,C e^{-2\lambda_* x} \} v_k (t,x) + \partial_u \tilde{f} (x,0) \overline{w}$$
is a supersolution for the equation satisfied by $w$. 

Before giving such a $\overline{w}$, let us first estimate $v_k (t,x)$. Let $\lambda_* >0$ be the unique solution of~$\lambda^2 - c^* \lambda = \mu (\lambda)$. Then let $c' >0$ such that $\lambda_*^2 - c' \lambda_* = -\| \partial_u f (\cdot,0) \|_\infty$.
One can check that the following function
$$\overline{v}_k (t,x) := \min \{ p(x) \; , \; e^{\lambda_* (x+R-y_k+c't)} \}$$ 
is a supersolution of \eqref{eqn:eqRD} and is such that $\overline{v}_k (0,\cdot) \geq v_k (0,\cdot)$. Hence, from the comparison principle, $v_k (t,x) \leq e^{\lambda_* (x+R-y_k+c't)}$ for any $t>0$ and $x\in \R$.

Define now $$\overline{w}_1 (t,x):= \max \{C \; , \; 2 \| \partial_u f(\cdot,0) \|_\infty\}  \times \min \{ 1 \; , \; e^{-\lambda_*x}\} \times t e^{\lambda_* (R -y_k +c't)}\geq 0.$$
Let us check that this is indeed a supersolution for the equation satisfied by $w$. First, for any $x  < 0$:
\begin{eqnarray*}
&& \partial_t \overline{w}_1 - \partial_{xx} \overline{w}_1  - \min \{2 \| \partial_u f (\cdot, 0) \|_\infty , Ce^{-2\lambda_* x} \} v_k (t,x) - \partial_u \tilde{f} (x,0) \overline{w}_1 \\
& \geq & \partial_t \overline{w}_1 - 2 \| \partial_u f (\cdot, 0) \|_\infty e^{\lambda_* (x+R-y_k+c't)} - \partial_u \tilde{f} (x,0) \overline{w}_1 \\
& \geq & \partial_t \overline{w}_1 - 2 \| \partial_u f (\cdot, 0) \|_\infty e^{\lambda_* (R-y_k+c't)} - \partial_u \tilde{f} (x,0) \overline{w}_1 \\
& \geq & \partial_t \overline{w}_1 - 2 \| \partial_u f (\cdot, 0) \|_\infty e^{\lambda_* (R-y_k+c't)} - \partial_u \tilde{f} (x,0) \overline{w}_1 \\
& \geq & \frac{\overline{w}_1} {t} - 2 \| \partial_u f (\cdot, 0) \|_\infty e^{\lambda_* (R-y_k+c't)} + c' \lambda_*  \overline{w}_1  - \| \partial_u f (\cdot ,0)\|_\infty \overline{w}_1 \\
& \geq & \lambda_*^2 \overline{w}_1 \geq 0.
\end{eqnarray*}
Besides, for any $x >0$:
\begin{eqnarray*}
&& \partial_t \overline{w}_1 - \partial_{xx} \overline{w}_1  - \min \{2 \| \partial_u f (\cdot, 0) \|_\infty , Ce^{-2\lambda_* x} \} v_k (t,x) - \partial_u \tilde{f} (x,0) \overline{w}_1 \\
& \geq & \partial_t \overline{w}_1 - \partial_{xx} \overline{w}_1 -Ce^{-2\lambda_*x}  e^{\lambda_* (x+R-y_k+c't)} - \partial_u \tilde{f} (x,0) \overline{w}_1 \\
& \geq & \frac{\overline{w}_1}{t} + c'\lambda_*  \overline{w}_1 - \lambda_*^2 \overline{w}_1 - C e^{\lambda_* (-x+R-y_k+c't)} -\| \partial_u \tilde{f} (\cdot,0) \|_\infty \overline{w}_1 \\
& \geq & \frac{\overline{w}_1}{t}  - C e^{\lambda_* (-x+R-y_k+c't)}  \\
& \geq & C e^{\lambda_* (R-y_k +c't)} \times \left( e^{-\lambda_*x} - e^{-\lambda_* x} \right) \geq 0.
\end{eqnarray*}
\begin{rmq}
Note that this is where we need our strong ``close-to-periodic" hypothesis, that is that $f$ converges to a periodic function at least exponentially with some factor larger than $2 \lambda_*$ as~$x \rightarrow +\infty$. It is likely that this assumption could be slightly weakened, although it is not clear up to what extent if using a similar proof.
\end{rmq}

Therefore, it follows from the parabolic comparison principle and the fact that $$w (t=0,\cdot)\equiv 0 \leq \overline{w}_1 (t=0,\cdot),$$ that for any $t>0$ and $x\in \R$:
$$w(t,x) \leq \overline{w}_1 (t,x).$$
However, although small at first, $\overline{w}_1$ then spreads with the speed $c'$ which is larger than $c^*$, which means that it will catch up with $\tilde{v}_k$ before the time $t_k$. Another supersolution is needed.\\

Let us estimate the smallest time $s_k$ such that $\overline{w}_1 (s_k,0) = \| p \|_\infty$. One then has that
$$ s_k e^{\lambda_* ( -y_k +c' s_k)}=\frac{\| p \|_\infty e^{-\lambda_*R}}{\max \{ C, 2 \| \partial_u f (\cdot ,0) \|_\infty \} }.$$
It is straightforward to check that $c' s_k \sim y_k$. We now want to find a supersolution $\overline{w}_2$ to be used from the time $s_k$. Define
$$\overline{w}_2 (t,x) := \| p \|_\infty (1 +C t) \; \frac{ \phi_{\lambda_*} (x)}{\min \phi_{\lambda_*}} e^{-\lambda_* (x-c_*t)}  \geq 0.$$
It is clear from the choice of $s_k$ that $w (s_k , \cdot) \leq \overline{w}_1 (s_k, \cdot) \leq \overline{w}_2 (0,\cdot)$. Denote by $x (t)$, for each time, the smallest point $x$ such that $\overline{w}_2 (t,x) \leq \| p \|_\infty$. It is well-defined, nonnegative and increasing with respect to $t$. It is clear from the definition of $w$ that $w (t,x) \leq \| p \|_\infty$ for all $t$ and $x$. Therefore, one only needs to check that $\overline{w}_2$ is a super-solution of the equation satisfied by $w$ on the domain $\{ (t,x) \; | \; x \geq x(t) \}$. In any point of this domain, one has:
\begin{eqnarray*}
&& \partial_t \overline{w}_2 - \partial_{xx} \overline{w}_2  - \min \{2 \| \partial_u f (\cdot, 0) \|_\infty , Ce^{-2\lambda_* x} \} v_k (t+s_k,x) - \partial_u \tilde{f} (x,0) \overline{w}_2 \\
& \geq & \partial_t \overline{w}_2 - \partial_{xx} \overline{w}_2  -Ce^{-2\lambda_* x} \| p \|_\infty - \partial_u \tilde{f} (x,0) \overline{w}_2 \\
& \geq & c_* \lambda_* \overline{w}_2 + C e^{-\lambda_* (x-ct)} \| p \|_\infty - \lambda_*^2 \overline{w}_2 + 2 \lambda_* \frac{\partial_x \phi_{\lambda_*}}{\phi_{\lambda_*}} \overline{w}_2  - \frac{\partial_{xx} \phi_{\lambda_*}}{\phi_{\lambda_*}}{\overline{w}_2} - C^{-2\lambda_* x}\| p \|_\infty - \partial_u \tilde{f} (x,0) \overline{w}_2\\
& \geq & \overline{w}_2 \left( \mu (\lambda_*) + c_* \lambda_* - \lambda_*^2  \right) + Ce^{-\lambda_* (x-ct)} \| p \|_\infty- Ce^{-2\lambda_* x} \| p \|_\infty\\
& \geq & C \| p \|_\infty ( e^{-\lambda_* x} - e^{-2\lambda_* x}) \geq 0.
\end{eqnarray*}
We conclude that for any $t>0$ and $x \in \R$:
$$w (s_k +t,x) \leq \min \{ \| p \|_\infty , \overline{w}_2 (t,x) \}.$$
Therefore, for any $x \geq kL - \delta (k) k$,
\begin{eqnarray*}
w (t_k, x )& \leq & \overline{w}_2 (t_k - s_k,x) \\
& \leq & \| p \|_\infty (1 + C t_k ) \frac{1}{\min \phi_{\lambda_*}} e^{-\lambda_* (kL - \delta (k) k-c_* (t_k -s_k))}\\
& \leq & \| p \|_\infty (1 + C t_k ) \frac{1}{\min \phi_{\lambda_*}} e^{-\lambda_* (-2 |\delta (k)| k + c_* s_k)} =: \zeta_1 (k).
\end{eqnarray*}
It follows from the fact that $c' s_k \sim y_k$ and the choice of $y_k$ that $\zeta (k) \rightarrow 0$ as $k \rightarrow +\infty$, independently of the choice of $\gamma_k \leq 1$. This ends the proof of our lemma.\end{proof} 

\subsection{Convergence to the pulsating front}

We first state some lemma, which relies on the same proof as in the periodic case:
\begin{lem}
There exists $\zeta_2 (k) \rightarrow 0$ as $k \rightarrow +\infty$ such that for any choice of $\gamma_k \leq 1$ and~$x > kL - \delta (k) k$, one has
$$| \tilde{v}_k (t_k,x) - U_{c^*} (0,x-kL) | \leq \zeta_2 (k).$$
\end{lem}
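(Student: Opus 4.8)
The plan is to reduce this lemma, which concerns a solution $\tilde v_k$ of the periodic equation \eqref{eqn:eqRD_lim} with initial datum $v_{0,k}$ that depends on $k$ and is evaluated at a $k$-dependent time $t_k$, to the argument already carried out in Section~\ref{sec:periodic}. The key observation is that the machinery of Section~\ref{sec:periodic}---producing the bounds $u(t_k,\cdot)\le U_{c^*}(0,\cdot-kL)+\zeta(t_k)$ on the left of $kL$ and the reverse, together with the ``steeper than'' estimates involving $\eta_{1,\delta}(r_k)$ and $\eta_{2,\delta}(r_k)$---only ever used of the initial datum that it be squeezed between $0$ and $p$ and that it decay at most like $e^{-\lambda' x}$ (in fact the compactly supported $u_{0,r}$ used there was built precisely to trap things); the constants $\zeta$, $\eta_{1,\delta}$, $\eta_{2,\delta}$ were all stressed to depend on $u_0$ only through the hitting time $t_k$ and the amplitude constant $A$. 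Since $v_{0,k}=\gamma_k\tilde p\chi_{(y_k-R,y_k+R)}$ is compactly supported with $0\le v_{0,k}\le\tilde p\le p$ and uniformly bounded amplitude, those estimates apply verbatim to $\tilde v_k$ in the periodic medium \eqref{eqn:eqRD_lim}.

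First I would recall that $\gamma_k$ was chosen exactly so that $t_k=\inf\{t>0\ :\ \tilde v_k(t,kL)=\tilde p(0)/2\}$, so the normalization matching $\tilde v_k(t_k,kL)$ to the half-height of $U_{c^*}$ at the lattice point $kL$ is automatic; this is the analogue of the definition of $t_k$ in Section~\ref{sec:periodic}. Next I would invoke Lemma~\ref{lem:spread} applied to the periodic equation, which together with the choice $y_k=o(kL)$ forces $t_k\sim kL/c^*$, i.e. the average speed of $\tilde v_k$ up to time $t_k$ is asymptotically $c^*$; this is precisely the hypothesis ``the position of the profile is close to $c^*t$ up to $o(t)$'' that the remark after Theorem~\ref{th:CV_p} identified as the only thing the periodic proof really needs. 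Then I would replay the two halves of Section~\ref{sec:periodic}: the ``less steep'' half via the Heaviside comparison datum $H(a-x)p(x)$ and the steepness of $U_{c^*}$ among entire solutions, and the ``steeper'' half via the perturbed datum $v_{0,k}+\phi_{\lambda_c}e^{-\delta r_k}e^{-\lambda' x}$ (with $\delta\to0$, $r_k\to+\infty$), obtaining the two-sided bound $|\tilde v_k(t_k,x)-U_{c^*}(0,x-kL)|\le\zeta_2(k)$ on $x\ge kL-\delta(k)k$, where $\zeta_2(k)$ collects $\eta_{1,\delta}+\eta_{2,\delta}+\epsilon_1(\delta)+\zeta(t_k)$ and tends to $0$ as $k\to+\infty$ by the same diagonal argument ($\delta=\delta_n\to0$, $k\ge k_n$) used to close the periodic proof; crucially none of these constants depends on $\gamma_k$, only on $R$ and the fixed bound $\tilde p\le p$, so the uniformity in $\gamma_k\le1$ is inherited.

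The main obstacle I anticipate is bookkeeping rather than conceptual: I must check that the range of validity $x>kL-\delta(k)k$ claimed here is compatible with the range $x\ge(c^*-\delta^2)t_k(r_k)$ on which the periodic estimates hold, which uses $kL\ge(c^*-\delta^2/2)t_k$ and $\delta(k)\to0$, and I must track that the auxiliary ``hitting times'' $t_k(r_k)$, $s_k$ built in Section~\ref{sec:periodic} still satisfy their sign and size constraints when the underlying datum is $v_{0,k}$ rather than a fixed $u_0$---in particular that $t_k-t_k(r_k)\to0$ uniformly, which in the periodic proof came from the strong maximum principle and local convergence to $p$ and transfers since $\tilde v_k$ is a genuine solution of a periodic KPP equation. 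Once these compatibility checks are in place, the lemma follows by simply citing the periodic argument with $v_{0,k}$ in place of $u_0$ and letting $\zeta_2(k)$ be the resulting error, so the proof can be kept short.
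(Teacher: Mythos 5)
Your proposal is correct and follows essentially the paper's own route: the paper likewise proves the lemma by rerunning the Section~\ref{sec:periodic} machinery on the (shifted) periodic solution $\tilde{v}_k(t,x+y_k)$, using that the error terms $\eta_{1,\delta}$, $\eta_{2,\delta}$, $\epsilon_1$, $\zeta$ depend on the initial datum only through the uniform constant $A$ and the hitting time, together with $t_k\sim \frac{kL}{c^*}$. The only real difference is in how $\delta$ is sent to $0$: instead of your $\delta_n$--diagonal argument plus a range-compatibility check, the paper couples the parameters to $k$ directly, taking $\delta_k=\sqrt{2y_k/t_k}$ and $r_k=2\lambda^*\delta_k t_k$, so that the two-sided estimate holds exactly on $x\ge kL-\frac{\delta_k^2}{2}t_k=kL-y_k\supset(kL-\delta(k)k,+\infty)$ (recall $\delta(k)k=o(y_k)$) and the vanishing of the error terms can be verified directly for this choice.
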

\begin{proof}
Recall that 
$$t_k = \inf \left\{ t >0 \; | \ \tilde{v}_k (t,kL) = \frac{\tilde{p}(0)}{2} \right\}.$$
As we apply the same method as in the periodic section to the shifted solution $\tilde{v}_k (t,x+y_k)$ of the limiting periodic reaction-diffusion equation, we will omit here the details. Note that, since $\gamma_k \leq 1$ for any $k$, there exits $A$ such that for any $k$, $\tilde{v}_k (0,x+y_k) \leq A e^{-\lambda_\infty x}$.

Let $$\delta_k = \sqrt{2\frac{y_k}{t_k}} \rightarrow 0,$$ 
$$r_k = 2 \lambda^* \delta_k t_k \rightarrow +\infty,$$
where the limits hold as $k \rightarrow +\infty$. Since we already know that $t_k \sim \frac{kL}{c^*} \sim \frac{kL - y_k}{c^*}$, and because we have a uniform bound on the initial datum $\tilde{v}_k (0,\cdot)$, we can use the same method as in Section~\ref{sec:periodic} to get that
$$| \tilde{v}_k (t_k,x) - U_{c^*} (0,x-kL) | \leq \eta_{1,\delta_k} (r_k) + \eta_{2,\delta_k} (r_k) + \epsilon_1 (\delta_k) + \epsilon_2 (k) + \zeta (t_k (r_k))
$$
uniformly with respect to $x \geq kL - \frac{\delta_k^2}{2} t_k = kL-y_k$, where all the terms of the right-hand side have been introduced in Section~\ref{sec:periodic} and still converge to 0 as $k \rightarrow +\infty$ (in Section~\ref{sec:periodic}, we used the fact that~$\delta$ did not depend on~$k$, but one can directly check here the convergence to 0 as $k \rightarrow +\infty$ thanks to our choice of parameters). Lastly, since $\delta (k) k = o(y_k)$, the conclusion of our lemma easily follows.
\end{proof}\\

We can now conclude the proof of Theorem~\ref{th:CV_ctp}. From all the above, we know that $u(t_k,\cdot)$ is steeper than $v(t_k,\cdot)$, which is close to the pulsating traveling wave for any $x \geq kL - \delta (k)k$:
\begin{eqnarray*}
u(t_k,x) & \leq & U_{c^*} (0,x-kL) + \zeta_1 (k) + \zeta_2 (k), \ \  x \geq kL,\\
& \geq & U_{c^*} (0,x-kL) - \zeta_1 (k) - \zeta_2 (k), \ \ kL - \delta (k) k  \leq x \leq kL,
\end{eqnarray*}
where $\zeta_1 (k) + \zeta_2 (k) \rightarrow 0$ as $k \rightarrow +\infty$. On the other hand, we have mentioned that $u (t+t_k,x+kL)$ converges locally uniformly to an entire solution of \eqref{eqn:eqRD_lim}, which is necessarily less steep than any shift of $U_{c^*}$ (as it is the steepest of all entire solutions). It follows that $u(t_k,x+kL)$ converges locally uniformly to $U_{c^*} (0,x)$.

Let us check that this convergence is in fact uniform in the right half space $\{ x \geq \alpha (t_k) \}$, for any $\alpha (t) \rightarrow +\infty$ as $t \rightarrow +\infty$. Recall that $u(t_k,x) \geq p(x)-\varepsilon$ for any $0 \leq x \leq kL - \delta (k)k$. For $k$ large enough, one has that $| \tilde{p} (x) - p (x) | \leq \varepsilon$ for any $x \geq \alpha (t)$. It follows that 
$$p(x) \geq u(t_k,x) \geq p(x) - 2 \varepsilon \ \mbox{ for all } \ \alpha (t_k) \leq x \leq kL - \delta (k) k.$$
From the asymptotics of $U_{c^*}$, there exists $D>0$ such that $U_{c^*} (0,x) \leq \varepsilon$ for any $x \geq D$, and $U_{c^*} (0,x) \geq \tilde{p} (x) - \varepsilon$ for any $x \leq D$. As $u (t_k,x)$ is steeper in the weaker sense above, and stays between $0$ and $p$ from the comparison principle, we get for $k$ large enough that
$$| u(t_k,x+kL) - U_{c^*} (0,x) | \leq 2\varepsilon \ \mbox{ for all } \  x \geq kL +D,$$
$$| u(t_k,x+kL) - U_{c^*} (0,x) | \leq 2\varepsilon \ \mbox{ for all } \ kL - \delta (k) k \leq x \leq kL -D.$$
Lastly, we have seen that $u(t_k,x+kL)$ converges locally uniformly to $U_{c^*} (0,x)$, so that the same inequality as above also holds for $k$ large enough and $kL- D \leq x \leq kL+D$. Since $\varepsilon$ could be chosen arbitrarily small, we conclude that
$$\| u(t_k,x) - U_{c^*} (0,x-kL) \|_{L^\infty (\alpha (t_k),+\infty)} \rightarrow 0$$
as $k \rightarrow +\infty$. One can then easily proceed as in Section~\ref{sec:ending} to get the wanted convergence result. This finally ends the proof of Theorem \ref{th:CV_ctp}.
%%%%%%%%%%%%%%%%%%%%%%%%%%%%%%%%%%%%%%%%%%%%
%%%%%%%%%%%%%%%%%%%%%%%%%%%%%%%%%%%%%%%%%%%%

\end{document}